\newcommand{\N}{\mathbb{N}}
\newcommand{\A}{\cal{A}}
\newcommand{\Lan}{\cal{L}}
\newcommand{\vpi}[1]{\varphi_{\mathrm{#1}}}
\newcommand{\vp}{\varphi}
\newcommand{\ubi}[1]{\mathbf{u}_{\mathrm{#1}}}
\newcommand{\ub}{\mathbf{u}}
\newcommand{\Lpro}{\mathrm{Lpro}}
\newcommand{\Rpro}{\mathrm{Rpro}}
\newcommand{\GL}[1]{\mathrm{GL}^{(#1)}}
\newcommand{\GR}[1]{\mathrm{GR}^{(#1)}}
\newcommand{\Le}{\mathrm{Lext}}
\newcommand{\Rex}{\mathrm{Rext}}
\newcommand{\wb}{\mathbf{w}}
\newtheorem{pro}{Proposition}
\newtheorem{lem}[pro]{Lemma}
\newtheorem{rem}[pro]{Remark}
\newtheorem{thm}[pro]{Theorem}
\newtheorem{dfn}[pro]{Definition}
\newtheorem{cor}[pro]{Corollary}
\newtheorem{exa}[pro]{Example}
\begin{document}

\begin{center}
	\textbf{Bispecial factors in circular non-pushy D0L languages}\\[0.5cm]
	Karel Klouda\\[0.5cm]
	\texttt{karel.klouda@fit.cvut.cz}\footnote{Faculty of Information Technology,
	Czech Technical University in Prague, Th\'{a}kurova 9, 160 00, Prague 6}\\
	\texttt{http://www.kloudak.eu/}
\end{center}

\begin{abstract}
We study bispecial factors in fixed points of morphisms. In
particular, we propose a simple method of how to find all bispecial words of
non-pushy circular D0L-systems. This method can be formulated as an algorithm.
Moreover, we prove that non-pushy circular D0L-systems are exactly those
with finite critical exponent.
\end{abstract}

\noindent
\small{\emph{keywords:} bispecial factors, circular D0L systems, non-pushy
D0L systems, critical exponent}

\section{Introduction}

Bispecial factors proved to be a powerful tool for better understanding
of complexity of aperiodic sequences of symbols from a finite set. One of the
most studied families of such sequences are fixed points of morphisms. In this
paper we present a method of how to describe the structure of all bispecial
factors in a given fixed point.

The method we describe here can be partially spotted in results of
several authors: It is a sort of inverse of the algorithm by Cassaigne from
paper \cite{Cassaigne1994} which is concerned by pattern avoidability. Very
similar approach was used in \cite{Avgustinovich1999} by Avgustinovich and Frid
and in \cite{Frid1998} by Frid to describe bispecial factors of biprefix
circular morphisms and marked uniform morphisms, respectively. Actually, the
fact that all bispecial factors in a fixed point can be generated as members of some
easily constructed sequences was noticed in many papers where factor
complexity was computed, see, e.g., \cite{Luca1988, Cassaigne1997}. In this paper we
formalize this approach and prove that it works for a very wide class of
morphisms, namely non-pushy and circular morphisms. Moreover, it seems that the
assumptions we need for proofs can be weakened or even omitted and the main
theorems remain true.

The paper is organized as follows. In the next section we introduce necessary
notation and notions and also explain the importance of bispacial factors.
Since it is easier to explain the main result using examples than to formulate
it as a theorem, we do so in
Section~\ref{sec:explain_main_result}. Section~\ref{sec:main_proofs} contains
proofs of the crucial theorems and in Section~\ref{sec:infinite_branches} we
explain how to use our results to identify immediately all infinite special
branches. In Section~\ref{sec:assumptions} we prove that non-pushy and circular
morphisms are exactly those whose fixed points have finite critical exponent.

\section{Preliminaries}

Let $\mathcal{A} = \{0,1,\ldots, n - 1\}, n \geq 2$, be a finite
alphabet of $n$ letters; if needed, we denote this particular $n$-letter
alphabet as $\mathcal{A}_n$. An \emph{infinite word} over the alphabet $\mathcal{A}$ is a
sequence $\ub = (u_i)_{i \geq 1}$ where $u_i \in \mathcal{A}$ for all $i \geq 1$. If $v = u_j u_{j+1}\cdots u_{j+n-1}$, $j,n \geq 1$, then $v$ is
said to be a \emph{factor} of $\ub$ of length $n$, the empty word $\epsilon$
is the factor of length $0$. The set of all finite words over $\mathcal{A}$ is
the free monoid $\mathcal{A}^*$, the set of nonempty finite words is denoted by
$\mathcal{A}^+ = \mathcal{A}^* \setminus \{\epsilon\}$.

A map $\vp : \mathcal{A}^* \to \mathcal{A}^*$ is called a \emph{morphism} if
$\vp(wv) = \vp(w)\vp(v)$ for every $w, v \in \mathcal{A}^*$. Any morphism $\vp$
is uniquely given by the set of images of letters $\vp(a), a \in \mathcal{A}$.
If all these images are nonempty words, the morphism is called
\emph{non-erasing}. A famous example of a morphism is the Thue-Morse morphism
$\vpi{TM}$ defined by
\begin{equation*}%\label{eq:subsinfin}
	\begin{array}{rcl}
		\vpi{TM}(0)&=&01,\\
		\vpi{TM}(1)&=&10.
	\end{array}
\end{equation*}
This paper studies infinite \emph{fixed points} of morphisms: an infinite word
$w$ is a fixed point of a morphism $\vp$ if $\vp(w) = w$. If $\vp^\ell(w) = w$ for some positive $\ell$, $w$ is a \emph{periodic point} of
$\vp$. The fixed point of $\vpi{TM}$ beginning in the letter $0$ is the infinite
word
\begin{equation}\label{eq:def_uTM}
    \ubi{TM} = \lim_{n \to \infty} \vpi{TM}^n(0) = \vpi{TM}^\omega(0) =
    0110100110\cdots,
\end{equation}
which is called the \emph{Thue-Morse word}. 

An infinite word $\ub$ is \emph{aperiodic} if it is not
\emph{eventually periodic}, i.e., there are no finite words $v$ and $w$ such
that $\ub = v w w w w \cdots = v w^\omega$. If a word $u = v w$, then $v$ is a
\emph{prefix} of $u$ and $w$ is its \emph{suffix}. In this case we put
$(v)^{-1}u = w$ and $u(w)^{-1} = v$. Given a morphism $\vp$ on $\mathcal{A}$, if $\vp(a)$
is not a suffix of $\vp(b)$ for any distinct $a,b \in \mathcal{A}$, then
$\vp$ is said to be \emph{suffix-free}. \emph{Prefix-free} morphisms are defined analogously. 

The \emph{language of a fixed point} $\ub$ is the set of all its factors and is
denoted by $\mathcal{L}(\ub)$. When speaking about a morphism, we usually mean a
morphism together with its particular infinite fixed point. But a morphism can have more than one fixed point and
not all of them must have the same language (this is true if the
morphism is primitive). For instance, consider the
morphism $0 \mapsto 010, 1 \mapsto 11$: it has two fixed
points, one aperiodic starting in $0$ and one periodic starting in
$1$. Therefore, instead of speaking only about a morphism we
will always speak about a morphism and its particular infinite
fixed point. A well-established way of how to do so is to treat
a morphism and its fixed point as a D0L-system (see, e.g.,
\cite{Rozenberg1986} and \cite{Rozenberg1980}).
\begin{dfn}
    A triplet $G = (\mathcal{A},\vp,w)$ is called a \emph{D0L-system}, where
    $\mathcal{A}$ is an alphabet, $\vp$ a morphism on $\mathcal{A}$, and $w \in
    \mathcal{A}^+$ is an \emph{axiom}.
    The language of $G$ denoted by $\mathcal{L}(G)$ is the set
    of all factors of the words $\vp^n(w), n = 0,1,\ldots$

    If $\vp$ is non-erasing, then the system
    is called PD0L-system.
\end{dfn}
In what follows, when referring to a D0L-system, we always mean a
PD0L-system. In fact, for any D0L-system, it is possible to
construct its elementary (not simplifiable) version which is a
PD0L-system with an injective morphism \cite{Ehrenfeucht1978}
\cite{Seebold1985}.

Clearly, if $\vp(a) = av$ for some $a\in \mathcal{A}, v \in \mathcal{A}^+$, and if
$\vp$ is non-erasing, then the language of the D0L-system
$(\mathcal{A},\vp,a)$ is the language of the infinite fixed point
$\vp^\omega(a)$.

There are several tools which help us to study the structure of the language of
D0L-systems. We mention here two basic ones: the factor complexity and 
critical exponent. The factor complexity of a language is the function $C(n)$
which counts the number of factors of length $n$. The factor complexity is
usually obtained using the notion of \emph{special factors}.
\begin{dfn}
    Let $w$ be a factor of the language $\mathcal{L}(G)$ of a D0L-system $G$
    over $\mathcal{A}$. The set of \emph{left extensions} of $w$ is defined as
    $$
        \Le(w) = \{a \in \mathcal{A} : aw \in \mathcal{L}(G)\}.
    $$
    If $\#\Le(w) \geq 2$, then $w$ is said to be a \emph{left
    special (LS) factor} of $\mathcal{L}(G)$.

    In the analogous way we define the set of \emph{right extensions}
    $\Rex(w)$ and a \emph{right special (RS) factor}.  If $w$ is both left
    and right special, then it is called \emph{bispecial~(BS)}.
\end{dfn}
The connection between special factors and the factor complexity is described
in~\cite{Cassaigne1997}; the complete knowledge of LS, RS, or BS factors enables
to find the factor complexity.

The critical exponent is related to the repetitions in the language. Let
$w$ be a finite and nonempty word. Any finite prefix $v$ of $w^{\omega} =
www\cdots$ is a \emph{power} of $w$. We denote this by $v = w^r$, in words $v$
is \emph{$r$-power} of $w$, where $r = \frac{|v|}{|w|}$.  Further, we define the
\emph{index} of $w$ in a language $\mathcal{L}(G)$ of a D0L-system $G$ as 
$$ 
	\mathrm{ind}(w,G) = \sup\left\{ r \in \mathbb{Q} : w^r \in \mathcal{L}(G)
	\right\}. $$
And finally, the \emph{critical exponent} of the language $\mathcal{L}(G)$ is
the number
$$
	\sup\{ \mathrm{ind}(w) \mid w \in \mathcal{L}(G)\}.
$$
More details about critical exponent can be found, e.g.,
in~\cite{Krieger2007}. Examples of how knowledge of BS factors can help to
compute it for a fixed point of a morphism are in~\cite{Balkova2009}
and~\cite{Balkova2011}.

\section{Explaining the main result} \label{sec:explain_main_result}

Since the main result of this paper is a tool rather than a
theorem, we demonstrate it using example morphisms. The tool has
two ingredients. First one is a mapping that maps a BS factor to
another one and so, applied repetitively, it generates sequences
of BS factors. This mapping is defined by two directed labeled
graphs. The other ingredient is a finite set of BS factors such
that the sequences generated from them by the mapping cover all BS
factors in a given fixed point.

Let us consider the morphism $\vpi{E}$ defined by $0 \mapsto 012,
1 \mapsto 112, 2 \mapsto 102$ and the corresponding D0L-system
$({\mathcal{A}}_3, \vpi{E}, 0)$ with the fixed point $\ubi{E}$.
The factor $2112$ is LS and has left extensions $0$ and $1$. If we
apply the morphism $\vpi{E}$ on this structure -- both on the
factor and its two extensions --, the resulting factor
$\vpi{E}(2212)$ is no more LS since the respective extensions
$\vpi{E}(0) = 012$ and $\vpi{E}(1) = 112$ end in the same letter
$2$. In order to obtain a LS factor, we have to cut off the
longest common suffix of the new extensions, here it is $12$, and
append it to the beginning of the $\vpi{E}$-image of the factor:
the result is the LS factor $12\vpi{E}(2212)$ with left extensions
$0$ and $1$. We can proceed in the same manner and obtain another
LS factor $12\vpi{E}(12)\vpi{E}^2(2112)$ again with the same
extensions $0$ and $1$. Clearly, the same process works for RS
factors and right extensions.

Let us formalize what we did in the previous paragraph. Instead of
BS factors we will use a slightly different notion of \emph{BS
triplets} $((a,b), v, (c,d))$, where $v$ is a BS factor and
$(a,b)$ and $(c,d)$ are unordered pairs of its left and right
extensions, respectively. We assume that either $avc$ and $bvd$ or
$avd$ and $bvc$ are factors. Thus, $((0,1), 2112, (0,1))$ is a BS
triplet in $\ubi{E}$. In terms of the previous paragraph, we can
get another BS triplet from this one, namely $((0,1),
12\vpi{E}(2112),(0,1))$; we call this BS triplet the
\emph{$f$-image} of $((0,1), 2112, (0,1))$. The fact that left
extensions $(0,1)$ result again in extensions $(0,1)$ with
appending of $12$ can be represented as a directed edge from vertex
$(0,1)$ to vertex $(0,1)$ with label $12$. The edge corresponding
to the right extensions starts in $(0,1)$, ends again in $(0,1)$
and is labeled by the empty word. Applying this idea on all
possible pairs of left and right extensions gives us two directed
labeled graphs depicted in Figure~\ref{fig:GL_GR_E}. We call these graphs
\emph{graph of left and right prolongations}. With these graphs in hand, it is
easy to generate infinitely many BS triplets from a given starting one.
 \begin{figure}[ht]
        \begin{center}\includegraphics{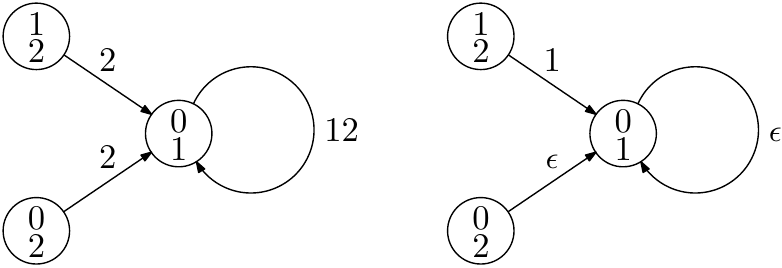}\end{center}
        \caption{The graphs defining the $f$-image for the morphism~$\vpi{E}$.} \label{fig:GL_GR_E}
    \end{figure}

The other ingredience of our method bears on the fact that all BS
triplets can be generated by taking repetitively $f$-image of only
finitely many \emph{initial} BS triplets. Initial BS triplets are
those which are not $f$-images of another BS triplet. For instance, $((0,1),
2112, (0,1))$ is not initial as it is the $f$-image of the BS
triplet $((0,1), 1, (0,2))$ which is initial. Later we will show
how to find all the initial factors for a given fixed point. For
the case of $\ubi{E}$, we have eight initial BS triplets:
\begin{center}
    \begin{tabular}{ c c c c }
      % after \\: \hline or \cline{col1-col2} \cline{col3-col4} ...
      $((0,1),121,(0,1))$, & $((0,1),12,(0,1))$, & $((0,1),21,(0,1))$, & $((0,1),2,(0,1))$, \\
      $((1,2),1,(1,2))$,  & $((0,2),1,(1,2))$,  & $((0,2),1,(0,2))$, & $((1,2),0,(1,2))$.
    \end{tabular}
\end{center}

The vertices of the graphs from Figure~\ref{fig:GL_GR_E} are just
all pairs of distinct letters, but the situation is not that
simple for all morphisms. In fact, it happens for graph of left (right)
prolongations only if the respective morphism is suffix-free (prefix-free). This
case when the morphism is both prefix- and suffix-free has been already
solved in~\cite{Avgustinovich1999}, where not only describe the authors
all BS factors, but they also give a formula for the factor
complexity.

Let us consider the morphism $\vpi{S}$ defined by $0 \mapsto 0012,
1 \mapsto 2, 2 \mapsto 012$ and the corresponding D0L-system
$({\mathcal{A}}_3, \vpi{S}, 0)$.
Clearly, the morphism is not suffix-free. Let $v$ be a LS factor
with left extensions $(1,2)$. If we apply the morphism as above,
we have a problem: the longest common suffix of factors
$\vpi{S}(1) = 2$ and $\vpi{S}(2) = 012$ is $2$ and so we do not
know what are the left extensions of the LS factor $2\vpi{S}(v)$.
A solution is to consider left extensions longer than one letter, in such a
case we say left prolongation instead of left extension. Clearly, the factor
$1$ is always preceded by $0$. Hence, let us consider left extensions $(01,2)$. Now, $\vpi{S}(01) = 00122$
is no more a suffix of $\vpi{2} = 012$ and so we know the new left
extensions: again $(01,2)$. In this way we can construct a
complete graph defining the respective $f$-image, the result is in
Figure~\ref{fig:GL_GR_S} (the notation will be explained later).

To prove that a proper \emph{finite} set of pairs of left and
right extensions of arbitrary length always exists is not trivial.
It is not simple even to describe the properties such sets should
posses so that they define a correct $f$-image. We will call such
sets \emph{left} and \emph{right forky sets}, see
Definition~\ref{dfn:L_forky}. It may happen that a finite forky
set does not exist and so our method fails. Therefore we will
have to put some restriction on the D0L-systems considered: we
will assume that the systems are \emph{circular} and
\emph{non-pushy}. These notions are explained in the following
section.

Finally, we can now state the main result of this paper: Given a
circular non-pushy D0L-system with an aperiodic fixed point, there
exist finite left and right forky sets defining two directed
graphs and a finite set of initial BS triplets such that the
corresponding $f$-image applied repetitively on the initial
BS-triplets generates all BS factors.

\section{Forky sets and initial factors} \label{sec:main_proofs}

\subsection{Circular and non-pushy D0L-systems}

Any factor of a fixed point of a morphism $\vp$ can be decomposed
into (possibly incomplete) $\vp$-images of letters. For example,
in the case of $\vpi{E}$ we have: 01210 is a factor of
$\vpi{E}(0)\vpi{E}(2)$, i.e., 01210 is composed by
$\vpi{E}$-images of 0 and 2. We denote this using bars, i.e.,
$012|10$. The decomposition may not be unique. For instance $210$
is always decomposed as $2|10$ but we do not know whether 2 is a
suffix of $\vpi{E}(0)$ or $\vpi{E}(1)$ (it cannot be a suffix of
$\vpi{E}(2)$ since $22$ is not a factor of $\ubi{E}$). In the case
of the factor $1$, we do not even know where to place the bar if not
at all.

A factor can have more than one decomposition; however, if there
is a common bar for all these decompositions, this bar is called a
\emph{synchronizing point}. Coming back to our example, $210$ has
a synchronizing point between 2 and 10, formally we say that
$(2,10)$ is a synchronizing point of $210$.
\begin{dfn}[Cassaigne \cite{Cassaigne1994}]\label{dfn:synchro_poit_injective}
    Let $\vp$ be a morphism with a fixed point
    $\ub$, $\vp$ injective on $\mathcal{L}(u)$, and let $w$ be a factor of
    $\ub$. An ordered pair of factors $(w_1,w_2)$ is called a \emph{synchronizing point} of $w$ if $w = w_1w_2$ and
    $$
        \forall v_1, v_2 \in {\mathcal{A}}^{*}, ({v}_{1} {w} {v}_2 \in \vp(\mathcal{L}(\ub)) \Rightarrow  {v}_{1} {w}_{1} \in \vp(\mathcal{L}(\ub)) \text{\ and\ }  v_2w_2 \in
        \vp(\mathcal{L}(\ub))).
    $$
    We denote this by $w = w_1|_sw_2$.
\end{dfn}

\begin{dfn}\label{dfn:circular_D0L_system}
    A D0L-system $G = (\mathcal{A},\vp,w)$ is \emph{circular} if $\vp$ is injective on $\mathcal{L}(G)$ and if there
    exists $D \in \N$ such that any $v \in \mathcal{L}(G)$ with $|v| \geq D$ has at least one
    synchronizing point. This $D$ is called a \emph{synchronizing
    delay}.
\end{dfn}
Some examples of both circular and non-circular D0L-systems
follow.
\begin{exa}
    The system $G = (\mathcal{A}_2, \vpi{TM},0)$ is circular with a
    synchronizing delay 4. It is clear that any $w \in \mathcal{L}(G)$
    containing 00 or 11 has the synchronizing point $w =
    \cdots0|_s0\cdots$ or $w = \cdots1|_s1\cdots$. To see that, let us consider
    a word $w$ of length 4 not containing these two factors.
    Without loss of generality, assume that $w$ begins in 1, then $w = 1010$. This word can be
    decomposed into $\vpi{TM}(0)$ and $\vpi{TM}(1)$ in exactly two
    ways: $|10|10|$ and $1|01|0$. But the latter one is not admissible
    since it arises as the $\vpi{TM}$-image of $000$ which is not an alement of
    $\mathcal{L}(G)$.
\end{exa}

\begin{exa}
    The system $G = (\mathcal{A}_2,\vp,0)$, where $\vp(0) = 01, \vp(1) =
    11$, is not circular. Indeed, for all $n$ the word $1^n$ has no
    synchronizing point since it can be decomposed as $|11|11|\cdots$ and
    $1|11|11|\cdots$.
\end{exa}
This example is very simple since the respective infinite fixed point
$011111\cdots$ is eventually periodic. However, there are also
aperiodic non-circular systems.
\begin{exa}
    The system $G = (\mathcal{A}_3,\vp,0)$, where $\vp(0) = 010, \vp(1) =
    22$, $\vp(2) = 11$, is not circular. The argument is the same as in the previous
    example, since the words $1^n$ are for all $n \in \N$ elements of
    $\mathcal{L}(G)$. However, the infinite word $\vp^\omega(0) = 0102201011110102\cdots$ is aperiodic.
\end{exa}
One can notice that the languages in the both non-circular
examples contain an arbitrary power of 1. It is not just a
coincidence but a general rule.
\begin{thm}[Mignosi and S\'{e}\'{e}bold \cite{Mignosi1993}] \label{thm:mignosi_seebold}
    If a D0L-system is $k$-power-free (i.e., $\mathcal{L}(G)$ does not contain
    the $k$-power of any word) for some $k \geq 1$, then it is circular.
\end{thm}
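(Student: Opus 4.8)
The plan is to prove the contrapositive: if the D0L-system $G = (\mathcal{A},\varphi,w)$ is \emph{not} circular, then for every $k \ge 1$ the language $\mathcal{L}(G)$ contains a $k$-power, and hence $G$ is not $k$-power-free.

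First I would reduce to the case that $\varphi$ is injective on $\mathcal{L}(G)$. Circularity (Definition~\ref{dfn:circular_D0L_system}) already includes injectivity of $\varphi$ on $\mathcal{L}(G)$, so if this fails one passes to the elementary system mentioned after Definition~\ref{dfn:synchro_poit_injective}, which has the same language but an injective morphism (or one checks directly that $\varphi(u)=\varphi(v)$ for distinct $u,v\in\mathcal{L}(G)$ already forces unbounded repetitions). So assume $\varphi$ is injective on $\mathcal{L}(G)$. Non-circularity then unfolds to the following: for every $D\in\mathbb{N}$ there is a word $v\in\mathcal{L}(G)$ with $|v|\ge D$ that has \emph{no} synchronizing point, i.e.\ no internal position of $v$ is a cutting point common to all decompositions of $v$ into $\varphi$-images of letters.

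From this family of un-synchronizable words of unbounded length I would extract a single periodic obstruction. Write $M=\max_a|\varphi(a)|$. A word $v$ as above, once $|v|$ is large enough, occurs inside $\varphi(z)$ for some $z\in\mathcal{L}(G)$, and since no position of $v$ is a common cut it carries a second, incompatible decomposition coming from some $z'\in\mathcal{L}(G)$; on a central stretch of $v$ the two cutting sets can be made disjoint. Recording at each position of that stretch the pair (letter and offset currently scanned in the first decomposition, letter and offset in the second) --- a datum from a fixed finite set of size at most $(\#\mathcal{A}\cdot M)^2$ --- and then applying a pigeonhole argument together with a K\"onig/compactness limiting argument as $D\to\infty$, I would produce a bi-infinite word $V$, all of whose factors lie in $\mathcal{L}(G)$, together with two desubstitutions $V=\varphi(Z)=\varphi(Z')$ with $Z\ne Z'$ whose cutting sets are disjoint. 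Iterating the desubstitution on $Z$ and on $Z'$ yields a tower in which, at every level, the two scansions stay strictly out of step by a bounded offset; finiteness of $\mathcal{A}$ and of the image lengths forces this tower to be eventually repetitive, and a Fine--Wilf type argument on the overlaps it creates then forces $V$ to be periodic. A periodic word whose factors all lie in $\mathcal{L}(G)$ supplies $r$-powers for every $r$, so $\mathcal{L}(G)$ is not $k$-power-free for any $k$, which is the desired contrapositive.

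The genuine obstacle is the middle of the previous paragraph: turning ``no position of $v$ is a common cutting point, for arbitrarily long $v$'' into ``a bi-infinite word with two everywhere-disagreeing desubstitutions'', and then into periodicity. The difficulty is that the desynchronization could a priori be spread thinly --- each individual pair of decompositions of $v$ agreeing on almost all of its cuts --- so the extraction really does need the compactness step, and the implication ``persistent disagreement along the tower $\Rightarrow$ $V$ periodic'' is essentially a recognizability statement in the spirit of Moss\'{e}'s theorem, whose proof requires carefully tracking the bounded offset between the two scansions through successive $\varphi$-preimages. A secondary and more routine loose end is the reduction to injective $\varphi$; and one should take care with letters $a$ for which $|\varphi(a)|=1$, since then $\varphi$-preimages need not be shorter, so the induction along the tower must be phrased in terms of the finite state set rather than of word length.
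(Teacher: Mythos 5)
The paper does not prove this statement at all: it is imported verbatim from Mignosi and S\'{e}\'{e}bold \cite{Mignosi1993} and used as a black box, so there is no internal proof to compare yours against. Judged on its own, your proposal is a reasonable roadmap for the contrapositive (non-circular $\Rightarrow$ $k$-powers for every $k$), but it is a roadmap rather than a proof: the two steps you yourself flag as ``the genuine obstacle'' are precisely where the entire content of the theorem lives, and they are asserted, not established. Concretely: (1) the passage from ``for arbitrarily large $D$ there is $v\in\mathcal{L}(G)$, $|v|\ge D$, with no synchronizing point'' to ``a single bi-infinite word $V$ with two desubstitutions whose cut sets are disjoint on an unbounded stretch'' is nontrivial, because the absence of a synchronizing point only says that each individual cut of $v$ is refused by \emph{some} pair of decompositions (depending on the cut and on the context $v_1,v_2$), not that one fixed pair of decompositions disagrees everywhere; concentrating the disagreement into one coherent pair of interpretations before taking the compactness limit is a genuine combinatorial argument that you acknowledge but do not supply. (2) The implication ``persistent disagreement along the tower of preimages $\Rightarrow$ $V$ periodic'' is exactly the recognizability phenomenon; appealing to ``a Fine--Wilf type argument'' and to Moss\'{e}'s theorem does not close it, since Moss\'{e}'s theorem (Theorem~\ref{thm:mosse} in the paper) assumes primitivity, which is unavailable here --- indeed the non-primitive case is the whole reason this paper needs Mignosi--S\'{e}\'{e}bold rather than Moss\'{e}. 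As stated, your argument restates the difficulty rather than overcoming it.

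Two secondary gaps: the reduction to injectivity is not routine for the theorem \emph{as stated}, because circularity in Definition~\ref{dfn:circular_D0L_system} includes injectivity of $\vp$ on $\mathcal{L}(G)$; passing to the elementary system produces a different (circular) system but does not make the original one circular, so you would actually need to prove that $k$-power-freeness forces injectivity of $\vp$ on $\mathcal{L}(G)$, which you leave open. Also, your final conclusion that $V$ is literally periodic is stronger than needed and possibly too strong to be provable along your route; the weaker and sufficient target is that some nonempty $z$ has $z^n\in\mathcal{L}(G)$ for all $n$ (compare Theorem~\ref{thm:strongly_repetitive}), and aiming directly at that via a pigeonhole on the pairs (letter, offset) occurring along the two scansions would likely be an easier closing move than establishing periodicity of a bi-infinite limit word.
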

Thus, non-circular fixed points must have infinite critical
exponent, in fact, they must
contain an unbounded power of some word.
\begin{thm}[Ehrenfeucht and Rozenberg \cite{Ehrenfeucht1983}]
\label{thm:strongly_repetitive}
    Given a D0L-system $G = (\mathcal{A}, \vp , w)$, if $\mathcal{L}(G)$ contains a
    $k$-power for all $k \in \N$, then $G$ is \emph{strongly repetitive}, i.e., there exists a nonempty $v \in L(G)$
    such that $v^\ell \in \mathcal{L}(G)$ for all $\ell \in \N$.
\end{thm}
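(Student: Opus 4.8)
This is the classical theorem of Ehrenfeucht and Rozenberg, so let me only describe the shape an argument would take. The easy direction is that strong repetitiveness gives $k$-powers for all $k$; for the converse the plan is to split according to whether arbitrarily high powers can be found with \emph{bounded period}, the bounded case being immediate, and to attack the unbounded case by pushing a high power down through $\vp$ while treating the slowly growing part of the alphabet separately.

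First the pigeonhole step. If for some $L$ there are, for arbitrarily large $r$, words of length at most $L$ admitting an $r$-power in $\mathcal{L}(G)$, then since $\mathcal{A}$ is finite one such word $v$ admits an $r$-power for arbitrarily large $r$; as $\mathcal{L}(G)$ is closed under factors, $v^\ell \in \mathcal{L}(G)$ for every $\ell \in \N$, and we are done. Otherwise, for every $L$ only finitely many $r$ admit a period-at-most-$L$ $r$-power, so we may fix primitive words $u_k$ and integers $r_k \to \infty$ with $u_k^{r_k} \in \mathcal{L}(G)$ and $|u_k| \to \infty$ (primitivity is free, the primitive root of a short word being short).

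Now the growth dichotomy. Call $a \in \mathcal{A}$ \emph{growing} if $|\vp^n(a)| \to \infty$ and \emph{bounded} otherwise, and let $\mathcal{B}$ denote the set of bounded letters. One uses the standard facts that $\vp(\mathcal{B}^*) \subseteq \mathcal{B}^*$, that the image of a growing letter contains a growing letter (else its iterates would stay of bounded length), and that for $b \in \mathcal{B}$ the set $\{\vp^n(b) : n \ge 0\}$ is finite, so $(\vp^n(b))_n$ is eventually periodic. Write each $u_k^{r_k}$ as a factor of $\vp^{N_k}(w) = \vp\bigl(\vp^{N_k-1}(w)\bigr)$ and let $y_k$ be the shortest factor of $\vp^{N_k-1}(w)$ with $u_k^{r_k}$ a factor of $\vp(y_k)$, so that $\vp(y_k) = x_k\, u_k^{r_k}\, x_k'$ with $|x_k|, |x_k'| < c := \max_a |\vp(a)|$. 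If infinitely many $u_k$ contain a growing letter, then each period of $u_k^{r_k}$ forces a growing letter into $y_k$, so $y_k$ carries roughly $r_k$ growing letters at bounded gaps; since growing letters genuinely expand, $|y_k| \le (1-\delta)\,|u_k^{r_k}|$ for a fixed $\delta > 0$ once $r_k$ is large, while the periodicity of $\vp(y_k)$ descends to an approximate period on $y_k$ that straightens into an honest power of a word of $\mathcal{L}(G)$; iterating this descent $O\bigl(\log |u_k^{r_k}|\bigr)$ times brings the repetition into a bounded-length factor of some $\vp^n(w)$, which feeds back into the pigeonhole step and, unwound, yields $v$. If instead infinitely many $u_k$ lie in $\mathcal{B}^*$, then (again, a growing letter in the preimage would inject one into $u_k^{r_k}$) the whole maximal $\mathcal{B}$-block of $\vp^{N_k}(w)$ containing $u_k^{r_k}$ is produced, under iteration of $\vp$, by finitely many growing letters of bounded-depth ancestors pumping a bounded pattern; combining eventual periodicity of the $\vp^n(b)$ with a pigeonhole over these finitely many sources and their alignments extracts a single $v \in \mathcal{B}^+$ with $v^\ell \in \mathcal{L}(G)$ for all $\ell$.

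The one genuinely non-routine point — the main obstacle — is making the descent rigorous: where bounded letters dominate, $\vp$ expands by only a bounded \emph{additive} amount, so a naive pullback of a long power need not shorten its period, and one must use the growing letters inside each period (equivalently, the classification of D0L growth functions into bounded, polynomial and exponential classes) to guarantee a fixed \emph{multiplicative} contraction and to keep a genuine power alive at each step. Everything else is pigeonhole plus eventual periodicity of the images of bounded letters. (It is also convenient to first replace $G$ by an elementary, hence injective, PD0L-system, which makes the block decompositions used in the descent behave predictably.)
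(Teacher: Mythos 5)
First, note that the paper does not prove this statement at all: Theorem~\ref{thm:strongly_repetitive} is imported verbatim from Ehrenfeucht and Rozenberg \cite{Ehrenfeucht1983}, so there is no internal proof to compare against. Your outline does capture the broad architecture of the classical argument --- pigeonhole on bounded periods, the split of the alphabet into growing and bounded letters, and a descent that pulls a long power back through $\vp$ --- and these are indeed the ingredients of the original treatment, which runs through the rank decomposition and the pushy/non-pushy dichotomy that this paper later borrows from the same source.

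As a proof, however, your text has a genuine gap, and you name it yourself: the descent step is not established. Two specific points fail as written. First, the claim that ``the periodicity of $\vp(y_k)$ descends to an approximate period on $y_k$ that straightens into an honest power of a word of $\mathcal{L}(G)$'' is precisely the hard synchronization step: the $\vp$-preimage of a periodic word need not be periodic, and making it so requires either a recognizability argument (you invoke elementarity and injectivity but never use them) or a Fine--Wilf style analysis of how the cutting bars of the decomposition of $\vp(y_k)$ interact with the period $|u_k|$; none of this is carried out, and it is exactly the point where non-circular systems such as $0 \mapsto 01$, $1 \mapsto 11$ show that naive pullback misbehaves. Second, the asserted multiplicative contraction $|y_k| \le (1-\delta)\,|u_k^{r_k}|$ and the claim that $O(\log |u_k^{r_k}|)$ descent steps suffice do not track the exponent: each pullback can destroy a bounded number of periods at each end and can replace $u_k$ by a different word, so you must show that the exponent of the descended power still tends to infinity and that the periods eventually stabilize in a finite set before the final pigeonhole applies. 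The bounded-letter case is likewise only gestured at. Since the central mechanism is exactly the part left unproved, what you have is an outline of a plausible strategy rather than a proof; to complete it you would need to supply the synchronization lemma and the exponent bookkeeping, or simply cite \cite{Ehrenfeucht1983} as the paper does.
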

We see that non-circular systems have very special properties.
Furthermore, the morphism of a non-circular system cannot be even
primitive. A morphism $\vp$ over $\mathcal{A}$ is primitive if there is $k \in
\N$ such that $\vp^k(a)$ contains $b$ for all $a,b \in \mathcal{A}$.
\begin{thm}[Moss\'{e} \cite{Mosse1996}]\label{thm:mosse}
    Any D0L-system $G = (\mathcal{A}, \vp , a)$ with $\vp$ injective on
    $\mathcal{G}$ and primitive is circular\footnote{In the article
    \cite{Mosse1996} the circular systems are called ``recognizable''.}.
\end{thm}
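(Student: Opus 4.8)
The plan is to obtain Moss\'{e}'s theorem from the two results stated just above it, rather than by a direct synchronization argument. By Theorem~\ref{thm:mignosi_seebold} it suffices to prove that $\mathcal{L}(G)$ is $k$-power-free for some $k$, and by the contrapositive of Theorem~\ref{thm:strongly_repetitive} it is in turn enough to show that $G$ is \emph{not} strongly repetitive, i.e.\ that no nonempty word $v$ satisfies $v^{\ell} \in \mathcal{L}(G)$ for all $\ell \in \N$. Throughout I assume the fixed point $\ub = \vp^{\omega}(a)$ is aperiodic, so that $\mathcal{L}(G) = \mathcal{L}(\ub)$; this is the setting of Moss\'{e}'s theorem, and some such restriction is genuinely needed, since a primitive morphism may have a periodic fixed point (for instance $0 \mapsto 010,\ 1 \mapsto 101$, whose fixed point $(01)^{\omega}$ is not circular in the sense of Definition~\ref{dfn:circular_D0L_system}).

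The first step is to show that $\ub$ is uniformly recurrent. Using primitivity, fix $p$ such that $a$ occurs in $\vp^{p}(c)$ for every $c \in \mathcal{A}$. Any factor $w$ of $\ub$ is a factor of some $\vp^{n}(a)$, and writing $\vp^{p}(c) = \alpha a\beta$ we get $\vp^{n+p}(c) = \vp^{n}(\alpha)\vp^{n}(a)\vp^{n}(\beta) \supseteq \vp^{n}(a) \supseteq w$ for every letter $c$. Now write $\ub = u_{1}u_{2}\cdots$ and decompose $\ub = \vp^{n+p}(\ub) = \vp^{n+p}(u_{1})\vp^{n+p}(u_{2})\cdots$: each block contains $w$ and has length at most $M := \max_{c}|\vp^{n+p}(c)|$, so any window of $\ub$ of length $3M$ contains a complete block and hence contains $w$. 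Thus $w$ recurs with gap bounded by a constant depending only on $w$, so $\ub$ is uniformly recurrent and its subshift $X_{\ub}$ is minimal. Since $\ub$ is aperiodic, $X_{\ub}$ is infinite, and a minimal infinite subshift has no periodic point (the orbit of a periodic point would be a finite nonempty closed shift-invariant proper subset). Finally, if some nonempty $v$ satisfied $v^{\ell} \in \mathcal{L}(\ub)$ for all $\ell$, then the bi-infinite periodic word of period $v$ would lie in $X_{\ub}$, a contradiction; hence $G$ is not strongly repetitive, and the reduction above yields circularity.

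The only substantial step is the uniform-recurrence claim, and that is exactly where primitivity is used: it lets one reinsert an arbitrary factor into $\vp^{n+p}(c)$ for \emph{every} letter $c$ while keeping all the blocks $\vp^{n+p}(c)$ of length at most one common constant $M$. Everything downstream — minimality, the absence of periodic points, the failure of strong repetitivity, and then the two cited theorems — is routine. The one point calling for care is the reduction itself: the statement as printed omits the aperiodicity hypothesis that Moss\'{e}'s theorem requires, so the argument should open by isolating, and setting aside, the periodic case.
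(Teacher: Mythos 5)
The paper offers no proof of this statement at all: it is imported verbatim from Moss\'{e}'s article, so any argument you give is necessarily a different route. Yours is a sound one. You chain the contrapositive of Theorem~\ref{thm:strongly_repetitive} (not strongly repetitive $\Rightarrow$ $k$-power-free for some $k$) with Theorem~\ref{thm:mignosi_seebold} ($k$-power-free $\Rightarrow$ circular), and you supply the one missing link yourself: a primitive morphism with an aperiodic fixed point is not strongly repetitive. Your argument for that link --- primitivity gives uniform recurrence with blocks of bounded length $M$, hence a minimal subshift; an infinite minimal subshift contains no periodic point; and a nonempty $v$ with $v^{\ell}\in\mathcal{L}(\ub)$ for all $\ell$ would put one there --- is standard and correct (indeed a window of length $2M$ already suffices). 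The two approaches buy different things: Moss\'{e}'s original proof is a self-contained combinatorial construction of the synchronizing structure, while yours is short but rests on two other imported results (Ehrenfeucht--Rozenberg and Mignosi--S\'{e}\'{e}bold), both independent of and prior to Moss\'{e}'s theorem, so no logical circularity arises. It is worth noting that your reduction is essentially the same use of Theorem~\ref{thm:mignosi_seebold} that the paper itself makes in the forward direction of the theorem of Section~\ref{sec:assumptions}.

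Your observation about aperiodicity deserves emphasis rather than correction: as printed, the statement omits a hypothesis that Moss\'{e}'s theorem genuinely requires, and your counterexample $0\mapsto 010$, $1\mapsto 101$ is valid --- the morphism is primitive and injective, yet the factors $(01)^{n}$ of the fixed point $(01)^{\omega}$ occur at positions realizing every residue of the bar offset, so they have no synchronizing point and the system is not circular in the sense of Definition~\ref{dfn:circular_D0L_system}. Restricting to an aperiodic fixed point, as you do at the outset, restores the intended statement, and under that hypothesis your proof is complete.
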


No matter how non-circular systems seem to be bizarre, there
is no known finite algorithm which would decide whether a given general
D0L-system is circular or not. Of course, if the respective
morphism is primitive, it is easy to prove it in finite steps. Later on we also
prove that if the system is non-pushy, then the circularity is equivalent to
repetitiveness which is decidable.
\begin{exa}
    An example of non-primitive but circular morphism is the one given by $0 \mapsto 0010, 1 \mapsto
    1$. This is the Chacon morphism~\cite{Chacon1969} and 5~is its synchronizing
    delay.
\end{exa}

\subsection{Non-pushy D0L-systems}

The following two definitions and the lemma are taken from
\cite{Ehrenfeucht1983}.
\begin{dfn}
    Let $G = (\mathcal{A},\vp,w)$ be a D0L-system. A letter $b \in \mathcal{A}$ has
    \emph{rank zero} if $\mathcal{L}(G_b)$, where $G_b =
    (\mathcal{A},\vp,b)$, is finite.
\end{dfn}
\begin{dfn}
    A D0L-system $G = (\mathcal{A},\vp,w)$ is \emph{pushy} if for all $n \in
    \N$ there exists $v \in \mathcal{L}(G)$ of length $n$ which is composed of
    letters that have rank zero; otherwise $G$ is \emph{non-pushy}. If $G$ is
    non-pushy, then $q(G)$ denotes 
    $$
        q(G) = \max\{|v| \mid v \in \mathcal{L}(G) \text{ is composed of letters
        that have rank zero}\}. $$
\end{dfn}
\begin{lem}\label{lem:pushy_systems} \
    \begin{enumerate}
        \item It is decidable whether or not an arbitrary D0L-system
        is pushy.
        \item If $G$ is pushy, then $\mathcal{L}(G)$ is strongly repetitive (see Theorem~\ref{thm:strongly_repetitive}).
        \item If $G$ is non-pushy, then $q(G)$ is effectively computable.
        \item It is decidable whether or not an arbitrary D0L-system is strongly
        repetitive.
    \end{enumerate}
\end{lem}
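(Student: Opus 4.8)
The four items are classical---they are essentially the content of the discussion around strong repetitiveness in~\cite{Ehrenfeucht1983}---and I would build the whole argument around one structural fact about the sublanguage $\mathcal{L}(G)\cap\mathcal{A}_0^{*}$ spanned by the rank-zero letters. The entry point is that the set $\mathcal{A}_0$ of rank-zero letters is effectively computable: $b$ has rank zero iff $\sup_{n}|\vp^{n}(b)|<\infty$, and, $\vp$ being non-erasing, this boundedness is a decidable property of the incidence matrix of $\vp$---equivalently, of the multigraph on $\mathcal{A}$ carrying one edge $a\to c$ per occurrence of $c$ in $\vp(a)$: $|\vp^{n}(b)|$ stays bounded precisely when no walk out of $b$ meets a strongly connected component that branches and no walk out of $b$ runs through two non-trivial strongly connected components (non-trivial meaning it contains a cycle). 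We also record that $\vp(\mathcal{A}_0^{*})\subseteq\mathcal{A}_0^{*}$ and that $\mathcal{L}(G)$ is effectively computable up to any prescribed length---a standard fact, which here also follows from the peeling argument below, as that argument bounds how deeply one must iterate $\vp$ to see every short $\mathcal{A}_0$-factor.

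The heart of the matter is a pumping statement: there is a computable bound $B$, depending only on $\#\mathcal{A}$ and $\max_{a}|\vp(a)|$, such that $\mathcal{L}(G)$ contains an $\mathcal{A}_0$-factor of length greater than $B$ if and only if it contains $\mathcal{A}_0$-factors of arbitrarily large length. One direction is trivial. For the other I would peel off $\vp$ repeatedly. A maximal $\mathcal{A}_0$-run $v$ of $\vp^{m}(w)$ lies inside a block $\vp(a_{1})\vp(a_{2})\cdots\vp(a_{r})$, where $a_{1}a_{2}\cdots a_{r}$ is a factor of $\vp^{m-1}(w)$; its inner letters $a_{2},\dots,a_{r-1}$ must have rank zero, their $\vp$-images being subwords of $v\in\mathcal{A}_0^{*}$. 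Hence $v$ equals, up to a bounded prefix and a bounded suffix drawn from a fixed finite list (a maximal $\mathcal{A}_0$-suffix of $\vp(a_{1})$, respectively $\mathcal{A}_0$-prefix of $\vp(a_{r})$), the $\vp$-image of a strictly shorter $\mathcal{A}_0$-factor $u$ of $\vp^{m-1}(w)$. Iterating, $v$ gets written as $s_{1}\,\vp(s_{2})\,\vp^{2}(s_{3})\cdots\vp^{k}(u^{(k)})\cdots\vp^{2}(p_{3})\,\vp(p_{2})\,p_{1}$ with all $s_{i},p_{i}$ in a fixed finite set. After replacing $\vp$ by a suitable power one may moreover assume that the rank-zero letters occurring deep inside $\vp$-images are merely permuted by $\vp$ (such a letter $c$ satisfies $\vp^{t}(c)=c$, forced because $\vp$ is non-erasing and fixes each of the ultimately periodic words $\vp^{S}(a)$, $a\in\mathcal{A}_0$). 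Then the $\mathcal{A}_0$-sublanguage is, modulo its bounded boundary pieces, assembled from finitely many periodic blocks, and a pigeonhole argument on the descending sequence of boundary data of one sufficiently long run produces a nonempty word $z$ with $z^{\ell}\in\mathcal{L}(G)$ for every $\ell$; in particular $\mathcal{L}(G)$ then contains $k$-powers for all $k$.

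Items (1)--(3) now follow. $G$ is pushy iff $\mathcal{L}(G)$ has $\mathcal{A}_0$-factors of arbitrarily large length, which by the pumping statement happens iff it has one of length $B+1$; this is decided by a finite search, proving (1). If $G$ is pushy, the pumping statement yields a nonempty $z$ with all powers $z^{\ell}\in\mathcal{L}(G)$, so $\mathcal{L}(G)$ contains $k$-powers for all $k$ and is strongly repetitive by Theorem~\ref{thm:strongly_repetitive}; this proves (2). If $G$ is non-pushy, then no $\mathcal{A}_0$-factor exceeds $B$, so $q(G)\le B$ and $q(G)$ is obtained by listing the finitely many $\mathcal{A}_0$-factors of length $\le B$; this proves (3).

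For (4) one more ingredient from~\cite{Ehrenfeucht1983} is needed: the characterization that a D0L-system is strongly repetitive exactly when it is pushy or it carries a bounded ``pumpable square'' witness---a letter $a$ and a factorization of some $\vp^{n}(a)$ exhibiting a square whose $\vp$-iterates remain squares of unbounded length---whose search space is itself effectively bounded (again because the relevant finite-length factor sets stabilize). Combined with (1), this decides strong repetitiveness. The one genuinely hard step in all of this is the pumping statement of the second paragraph: doing the peeling cleanly, proving the permutation normal form for rank-zero letters, and extracting an explicit computable $B$ (and, for (4), bounding the search for the square witness) is where the real work lies; everything else is bookkeeping on top of it.
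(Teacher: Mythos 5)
The paper does not prove this lemma: it states explicitly that the two definitions and the lemma are ``taken from \cite{Ehrenfeucht1983}'', so there is no in-paper argument to compare yours against, and what you have written is a reconstruction of the Ehrenfeucht--Rozenberg machinery. In outline your reconstruction follows the classical route (compute the set $\mathcal{A}_0$ of rank-zero letters from the incidence multigraph, note that $\vp$ preserves $\mathcal{A}_0^*$ and eventually permutes the ``deep'' rank-zero letters, and control maximal $\mathcal{A}_0$-runs by peeling off one application of $\vp$ at a time), and for items (1) and (3) this is essentially the right skeleton. Two concrete problems remain, though. First, your bound $B$ cannot depend only on $\#\mathcal{A}$ and $\max_a|\vp(a)|$: take $\vp(0)=00$, $\vp(1)=1$ and axiom $w=01^{1000}$; then $1$ has rank zero, the system is non-pushy, yet $1^{1000}$ is an $\mathcal{A}_0$-factor, so $B$ must also involve $|w|$ (your peeling terminates at an $\mathcal{A}_0$-run of the axiom whose length you never account for). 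Second, and more seriously, the step from ``one long run'' to ``arbitrarily long runs, hence a word $z$ with $z^\ell\in\mathcal{L}(G)$ for all $\ell$'' is exactly where the content of items (1) and (2) lies, and your sketch does not close it: the peeling gives $|v|\le 2mK+K|w|$ for a run $v$ at level $m$, so a long run at a fixed level proves nothing by itself; what is needed is a pigeonhole on the sequence of growing boundary letters that inject nonempty rank-zero material into the run, showing that once enough nonempty injections occur the feeding configuration repeats along a cycle of growing letters and, because the deep rank-zero letters are fixed by a power of $\vp$, the repeated injection pumps out $z,z^2,z^3,\dots$ as factors. You gesture at this (``a pigeonhole argument on the descending sequence of boundary data'') without saying what is pigeonholed or why the recurrence yields all powers of a single $z$; since item (2) rests entirely on this, it is a genuine gap rather than bookkeeping.

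Item (4) is not proved at all in your write-up: you import from \cite{Ehrenfeucht1983} the characterization of strong repetitiveness as ``pushy or carries a bounded pumpable square witness'', together with the effective bound on the search space, and that equivalence \emph{is} the substance of item (4) once item (1) is available. Citing it is no worse than what the paper itself does for the entire lemma, but you should be explicit that you are asserting, not proving, the key equivalence. To your credit, you identify precisely these two points (the pumping statement and the square-witness bound) as ``where the real work lies''; as a self-contained proof the proposal is therefore incomplete at exactly those points, while as a map of the argument in \cite{Ehrenfeucht1983} it is accurate.
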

\begin{cor}[Krieger \cite{Krieger2007}] \label{cor:constant_C}
    Let $G = (\mathcal{A},\vp,a), a \in \mathcal{A},$ be a non-pushy D0L-system
    and let $\ub = \vp^\omega(a)$ be an infinite fixed point of $\vp$. Then
    there exists a non-erasing morphism $\vp'$ and an effectively computable $C \in \N$ such that $\ub =
    (\vp')^\omega(a)$ and for all $v \in \mathcal{L}(G)$ with $|\vp'(v)| =
    |v|$ we have $|v| < C$.
\end{cor}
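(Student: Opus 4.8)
The plan is to take $\vp'$ to be a power of $\vp$ and $C = q(G)+1$. Specifically, I would set $n = \#\mathcal{A}$ and $\vp' = \vp^n$. Since every D0L-system considered here is a PD0L-system, $\vp$ is non-erasing, hence so is $\vp'$; and because $\ub = \vp^\omega(a)$ is an infinite fixed point we have $\vp(a) = av$ with $v$ nonempty, so $\vp^n(a)$ starts with $a$ and is strictly longer than $a$, giving $(\vp')^\omega(a) = \lim_{m}\vp^{nm}(a) = \lim_m \vp^m(a) = \ub$ and $\mathcal{L}((\mathcal{A},\vp',a)) = \mathcal{L}(\ub) = \mathcal{L}(G)$. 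Thus it remains to bound $|v|$ for $v \in \mathcal{L}(G)$ with $|\vp'(v)| = |v|$.

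The key step — and the reason for passing to a power — is to show that every letter $b$ with $|\vp'(b)| = 1$ has rank zero in $G$. First I would observe that $\vp$ being non-erasing makes $j \mapsto |\vp^j(b)|$ non-decreasing, so $|\vp^n(b)| = 1$ forces each of $\vp^0(b),\dots,\vp^n(b)$ to be a single letter. By pigeonhole two of these $n+1$ letters coincide, say $\vp^i(b) = \vp^j(b)$ with $i < j \le n$; then $d := \vp^i(b)$ satisfies $\vp^{j-i}(d) = d$, and the intermediate iterates $\vp(d),\dots,\vp^{j-i-1}(d)$ (which are among $\vp^{i+1}(b),\dots,\vp^{j-1}(b)$) are single letters, so $\vp^m(d)$ — and therefore $\vp^m(b)$ — is a single letter for every $m \ge 0$. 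Hence $\mathcal{L}(G_b)$ is finite and $b$ has rank zero.

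Granting this claim, the conclusion is immediate: if $v \in \mathcal{L}(G)$ and $|\vp'(v)| = |v|$ then, $\vp'$ being non-erasing, equality in $|\vp'(v)| = \sum_i |\vp'(v_i)| \ge |v|$ forces $|\vp'(v_i)| = 1$ for every letter $v_i$ of $v$, so $v$ consists of letters of rank zero and $|v| \le q(G) < C$. Effectiveness is then clear: $\vp^n$ is computed by iterating $\vp$, and $q(G)$ is effectively computable by Lemma~\ref{lem:pushy_systems}(3), so $C = q(G)+1$ is too.

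The hard part is really just spotting the claim about letters with $|\vp^n(b)| = 1$: one cannot take $\vp' = \vp$, because a letter with $|\vp(b)| = 1$ may well fail to have rank zero (take $b \mapsto c$, $c \mapsto cc$), and passing to the $n$-th power exactly collapses this phenomenon. The remaining verifications — non-erasingness, the equality of fixed points and languages, and the length count — are routine.
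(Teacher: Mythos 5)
Your proof is correct and self-contained. The paper does not actually reprove this corollary---it cites Krieger and only remarks that $\varphi'$ can be taken to be a power of $\varphi$---and your argument (taking $\varphi'=\varphi^{\#\mathcal{A}}$, the pigeonhole observation that $|\varphi^{\#\mathcal{A}}(b)|=1$ forces $b$ to have rank zero, and $C=q(G)+1$ via Lemma~\ref{lem:pushy_systems}) supplies exactly that route, with the counterexample $b\mapsto c$, $c\mapsto cc$ correctly explaining why passing to a power is needed.
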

This means that for any word $v$ of length at least $C$ we have
$|\vp(v)| \geq |v| + 1$. More generally, if $|v| \geq KC$ then
$|\vp(v)| \geq |v| + K$. We will use this in the proof of the main
theorem of the following subsection.

In the sequel, we always suppose that the $G = (\mathcal{A},\vp,
a)$ is such that $\vp'$ can be taken equal to $\vp$ (in fact
$\vp'$ is just a power of $\vp$, see the proof
in~\cite{Krieger2007}). This is without loss of generality since
the language is the same.

\subsection{Forky sets}

Our aim is to define properly the notion of $f$-image introduced in
Section~\ref{sec:explain_main_result}. As explained, we need to have two directed labeled graphs defined on unordered pairs of
left and right prolongations. Since left and right extensions usually
refer to letters and the vertices of our graphs might be pairs of
words, we give the following definition.
\begin{dfn}
    Let $\ub$ be an infinite word and $w$ its factor. The set
    of \emph{left prolongations} of $w$ is the set
    $$
        \Lpro(w) = \{v \in \mathcal{A}^+ \mid vw \in \mathcal{L}(\ub)\}.
    $$
    In an analogous way we define the set of \emph{right prolongations} $\Rpro(w)$.
\end{dfn}
The sets $\Lpro(w)$ and $\Rpro(w)$ are, in general, infinite.

Our aim is to specify a suitable finite sets $\mathcal{B}_L$ and $\mathcal{B}_R$
of (unordered) pairs of left and right prolongations such that it
allows to define correctly an $f$-image of all triplets
$((w_1,w_2), v, (w_3,w_4))$, where $v$ is a BS factor, $(w_1,
w_2)$ a pair of its left and $(w_3, w_4)$ a pair of its right
prolongations from $\mathcal{B}_L$ and $\mathcal{B}_R$, respectively. The
$f$-image defined by the sets $\mathcal{B}_L$ and $\mathcal{B}_R$ are to be
defined as a BS triplet $((w'_1,w'_2), v', (w'_3,w'_4))$, where $(w'_1,w'_2)$
and $(w'_3,w'_4)$ are again in $\mathcal{B}_L$ and $\mathcal{B}_R$ and the
factor $v' = f_L(w'_1,w'_2)\vp(v)f_R(w'_3,w'_4)$ is BS. The mappings
$f_L$ and $f_R$ are defined as follows.
\begin{dfn}\label{dfn:f_L_f_R_intro}
    Let $\vp$ be a morphism over $\mathcal{A}$ and let $(v_1, v_2)$ be an unordered pair of words from $\mathcal{A}^+$.
    We define
    \begin{eqnarray*}
      f_L(v_1,v_2) &=& \text{the\ longest\ common\ suffix\ of\ } \vp(v_1) \text{\ and\ } \vp(v_2), \\
      f_R(v_1,v_2) &=& \text{the\ longest\ common\ prefix\ of\ } \vp(v_1) \text{\ and\ } \vp(v_2).
    \end{eqnarray*}
\end{dfn}
The purpose of the following definitions is just to describe
``good'' choices of $\mathcal{B}_L$ and $\mathcal{B}_R$.
\begin{dfn}
    Let $(w_1,w_2)$ and $(v_1,v_2)$ be unordered pairs of
    words. We say that
    \begin{itemize}
        \item[(i)] $(w_1,w_2)$ is a \emph{prefix} (\emph{suffix}) of
                    $(v_1,v_2)$ if either $w_1$ is a prefix (suffix) of $v_1$ and $w_2$ of
                    $v_2$, or $w_1$ is a prefix (suffix) of $v_2$ and $w_2$ of
                    $v_1$;
        \item[(ii)] $(w_1,w_2)$ and $(v_1,v_2)$ are \emph{L-aligned}
                    if
                    $$
                    (v_1 = uw_1 \text{\ or\ }  w_1 = uv_1) \quad \text{and} \quad ( v_2 = u'w_2  \text{\ or\ } w_2 = u'v_2)
                    $$
                    or
                    $$
                    (v_1 = uw_2 \text{\ or\ } w_2 = uv_1) \quad \text{and} \quad (v_2 = u'w_1 \text{\ or\ } w_1 = u'v_2)
                    $$
                    for some words $u,u'$.
    \end{itemize}
    Analogously, we define pairs which are \emph{R-aligned}.
\end{dfn}
\begin{exa}
    The pairs $(01,0)$ and $(001,10)$ are L-aligned,
    while $(01,0)$ and $(011,10)$ are not L-aligned.
    Schematically, the notion of L-aligned pairs of words is depicted in
    Figure~\ref{fig:L_aligned_pairs}.
    \begin{figure}[ht]
        \begin{center}\includegraphics{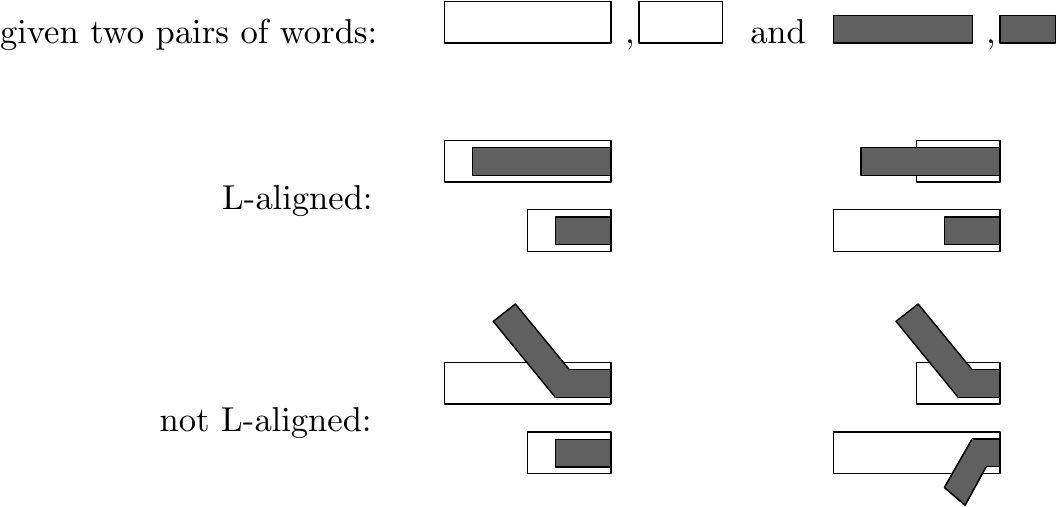}\end{center}
        \caption{L-aligned and not L-aligned pairs of words.} \label{fig:L_aligned_pairs}
    \end{figure}
\end{exa}
\begin{dfn} \label{dfn:L_forky}
    Let $\vp$ be a morphism with a fixed point $\ub$.
    A finite set $\mathcal{B}_L$ of unordered pairs $(w_1,w_2)$ of nonempty factors of
    $\ub$ is called \emph{L-forky} if all the following conditions
    are satisfied:
    \begin{itemize}
        \item[(i)] the last letters of $w_1$ and $w_2$ are different for all $(w_1, w_2) \in \mathcal{B}_L$,
        \item[(ii)] no distinct pairs $(w_1, w_2)$ and $(w'_1, w'_2)$ from $\mathcal{B}_L$ are L-aligned,
        \item[(iii)] for any $v_1,v_2 \in
        \mathcal{L}(\ub) \setminus \{\epsilon\}$ with distinct last letters there exists $(w_1,w_2) \in \mathcal{B}_L$ such that
        $(w_1,w_2)$ and $(v_1,v_2)$ are L-aligned,
        \item[(iv)] for any $(w_1, w_2) \in \mathcal{B}_L$ there exists $(w'_1, w'_2) \in \mathcal{B}_L$
        such that
        $$
            (w'_1f_L(w_1,w_2), w'_2f_L(w_1,w_2))
        $$
        is a suffix of $(\vp(w_1), \vp(w_2))$.
    \end{itemize}

    Analogously we define an \emph{R-forky} set.
\end{dfn}
Since the definition may look a bit intricate, we now comment on
all the conditions. Condition $(i)$ says that $w_1$ and $w_2$ are
left prolongations of LS factors (note that all pairs of words
$w_1,w_2 \in \mathcal{L}(\ub)$ are prolongations of the empty
word, i.e., elements of $\Lpro(\epsilon)$). Condition~$(ii)$ is
required to avoid redundancy in $\mathcal{B}_L$. Condition~$(iii)$
ensures that any two left prolongations of any LS factor are
included in $\mathcal{B}_L$ in the following sense: if we prolong
or shorten them in a certain way we obtain a pair from
$\mathcal{B}_L$. And, finally, Condition~$(iv)$ is there because
of the definition of the $f$-image: we want to be able to apply it
repetitively. Note that due to $(ii)$ and $(iii)$ the pair $(w'_1,
w'_2)$ from $(iv)$ is uniquely given. Note also that if $(i)$ is
satisfied and the words from all the pairs of $\mathcal{B}_L$
are of the same length, then $(ii)$ and $(iii)$ are satisfied
automatically.
\begin{exa} \label{exa:vp_S_set_B}
    Consider the morphism $\vpi{S}$ from
    Section~\ref{sec:explain_main_result} deifned by $0 \mapsto 0012,
	1 \mapsto 2, 2 \mapsto 012$. This morphism is injective
    and primitive and so, by Theorem~\ref{thm:mosse}, the respective D0L-system is circular.
    One can easily prove that 3 is a synchronizing delay
    (note that all factors containing $2$ has a synchronizing point $\cdots2|\cdots$.)

    Since $\vpi{S}$ is prefix-free, we get that the set
    $$
        \mathcal{B}_R = \{(0,1), (0,2), (1,2)\}
    $$
    is R-forky. However, this set is not L-forky:
    Condition $(iv)$ is not satisfied for any pair since $\vpi{S}(1)$ is a suffix of $\vpi{S}(2)$
    which is a suffix of $\vpi{S}(0)$. To remedy this, we
    consider left prolongations one letter longer which are ending in $1$ and
    $2$. Since the list of all factors of length 2 reads
    $$
        00,01,12,20,22
    $$
    the new pairs are
    \begin{equation}\label{eq:exa_vpS_list_Lpro}
        (0,01), (0,12), (0,22), (2,01).
    \end{equation}
    For these pairs conditions $(i)$ -- $(iii)$ are again satisfied.
    But $(iv)$ is not satisfied for $(0,12)$ since $f_L(0,12) =
    012$ and $(\vpi{S}(0)(012)^{-1}, \vpi{S}(12)(012)^{-1}) =
    (0,2)$ has no suffix in list~\eqref{eq:exa_vpS_list_Lpro}. Hence,
    we have to prolong $12$ again. There is only one possibility,
    namely $012$. The resulting set
    $$
        \mathcal{B}_L = \{(0,01), (0,012), (0,22), (2,01)\}
    $$
    is then L-forky since now  we get $(\vpi{S}(0)(012)^{-1}, \vpi{S}(012)(012)^{-1}) =
    (0,00122)$ with a suffix $(0,22) \in \mathcal{B}_L$.
\end{exa}
\begin{thm} \label{thm:existence_of_the_graph}
    Let $\vp$ be a morphism on $\mathcal{A}$ with a fixed point $\ub =
    \vp^\omega(a)$. If $(\mathcal{A},\vp, a)$ is circular non-pushy system, then
    it has L-forky and R-forky sets.
\end{thm}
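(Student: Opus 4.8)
The plan is to construct the L-forky set explicitly by a saturation procedure starting from the pairs of letters, and to use the non-pushy circularity assumptions to prove that this procedure terminates. Concretely, I would start with the finite family of all unordered pairs $(a,b)$ of distinct letters occurring as last letters of left prolongations of left special factors; this trivially satisfies conditions (i)--(iii) if we replace "pairs of letters" by "pairs of factors ending in distinct letters that are L-aligned with all such pairs". The difficulty is that condition (iv) may fail: $f_L(w_1,w_2)$ can be so long that after stripping it off, $(\vp(w_1)f_L^{-1}, \vp(w_2)f_L^{-1})$ has no suffix in the current set. The remedy, exactly as in Example~\ref{exa:vp_S_set_B}, is to lengthen the offending prolongations (replace $w_i$ by a longer left prolongation $u w_i$) and repeat. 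So the real content of the theorem is: this lengthening process stabilizes after finitely many steps.

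The key estimate comes from Corollary~\ref{cor:constant_C} together with circularity. Here is the heuristic I would turn into a proof. Suppose $(w_1,w_2)$ is a pair of left prolongations of a left special factor, with $w_1,w_2$ having distinct last letters, and let $s = f_L(w_1,w_2)$ be the longest common suffix of $\vp(w_1)$ and $\vp(w_2)$. I claim $|s|$ is bounded in terms of $\min(|w_1|,|w_2|)$ roughly like $|s| \le |\vp(\text{short overlap})| + (\text{const})$. More precisely: write $w_1 = x_1 y$, $w_2 = x_2 y$ where $y$ is the longest common suffix of $w_1$ and $w_2$ (possibly empty, since the last letters differ); then $s$ contains $\vp(y)$ as a suffix and $s(\vp(y))^{-1}$ is the longest common suffix of $\vp(x_1)$ and $\vp(x_2)$ where $x_1,x_2$ end in distinct letters. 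The latter common suffix must be short: by injectivity of $\vp$ and the synchronization property (circularity with delay $D$), a long common suffix of two $\vp$-images whose preimages end differently would force a long block of $\ub$ to have a non-synchronizing decomposition, contradicting circularity — so $|s(\vp(y))^{-1}| < E$ for an effectively computable constant $E$ depending only on $D$ and $\max_a|\vp(a)|$. Hence $|s| < |\vp(y)| + E \le |\vp(w_i)| - |\vp(x_i)| + E$. Now, when we strip $s$ from $\vp(w_i)$, the leftover word has length $|\vp(w_i)| - |s| > |\vp(x_i)| - E$; combining with Corollary~\ref{cor:constant_C} (which gives $|\vp(x_i)| \ge |x_i| + \lfloor |x_i|/C\rfloor$ once $|x_i|$ is large), we see that once the prolongations are long enough, stripping $s$ still leaves more than we started with, so the "new" required pair $(\vp(w_1)s^{-1},\vp(w_2)s^{-1})$ is at least as long as, hence eventually "contains as a suffix" something already present — the process cannot keep producing genuinely longer pairs forever.

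With that estimate in hand the structure of the proof is: (1) fix $C$ from Corollary~\ref{cor:constant_C} and $D$ the synchronizing delay, and set a threshold $N$ (e.g.\ $N = D + E + C$ or so); (2) observe that it suffices to work with left prolongations of length at most $N$, because any longer prolongation can be truncated to length $N$ without losing L-alignment information (two L-aligned pairs stay "comparable"); (3) among pairs of length $\le N$ whose components end in distinct letters, run the saturation: whenever condition (iv) fails for a pair, add the appropriate lengthened pair(s), truncating again to length $\le N$; (4) this is a process on a finite set (all such pairs, finitely many), so it terminates, and after removing redundant (L-aligned) pairs via (ii) we get a finite set satisfying (i)--(iv). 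The R-forky case is symmetric (mirror image: suffixes $\leftrightarrow$ prefixes, left $\leftrightarrow$ right).

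The main obstacle I expect is step (2)/(3): making precise the claim that bounding the length of prolongations by $N$ is harmless and that the truncation interacts correctly with conditions (iii) and (iv) — in particular, checking that after truncation condition (iv) still holds with the truncated pairs, which requires that the suffix witnessing (iv) in Definition~\ref{dfn:L_forky}(iv) only depends on boundedly many letters, again a consequence of circularity. A secondary technical point is the precise bound $E$ on common suffixes of $\vp$-images with distinct-ending preimages; this is essentially the statement that $\vp$ restricted to $\mathcal{L}(\ub)$ is "recognizable", and should follow from Definition~\ref{dfn:circular_D0L_system} by an argument of the type: if $\vp(x_1)$ and $\vp(x_2)$ share a suffix longer than $\max_a|\vp(a)|\cdot D$, look inside a factor of $\ub$ long enough to contain a synchronizing point and derive that $x_1$ and $x_2$ must agree on their last letter.
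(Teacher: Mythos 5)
Your two quantitative ingredients are exactly the ones the paper's proof relies on: (a) circularity bounds the longest common suffix $f_L(w_1,w_2)$ of $\vp$-images of words with distinct last letters (the paper argues $|f_L(w_1,w_2)|\le D$ because $f_L(w_1,w_2)$ without its last letter can have no synchronizing point; your constant $E$ plays the same role), and (b) non-pushiness via Corollary~\ref{cor:constant_C} gives $|\vp(w)|\ge |w|+K$ once $|w|\ge KC$, so stripping the bounded common suffix still leaves a word at least as long as the one you started with. Where you diverge is the construction itself, and there your proposal is not yet a proof. The saturation-plus-truncation-plus-pruning procedure has the two unresolved interactions that you yourself flag: first, after truncating a required suffix pair to length $\le N$ you must re-verify Definition~\ref{dfn:L_forky}(iv) for the truncated pair; second, and more seriously, when you prune L-aligned pairs to restore condition (ii), the surviving representative may be \emph{longer} than the pruned one, so the suffix condition in (iv) for pairs whose edge pointed to the pruned pair is not automatically inherited. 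Neither point is resolved in the proposal.

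The paper sidesteps all of this in one stroke: take $\mathcal{B}_L$ to be \emph{all} unordered pairs of factors of the single fixed length $M=DC$ with distinct last letters. Because every pair has the same length, conditions (ii) and (iii) hold automatically (as noted right after Definition~\ref{dfn:L_forky}), condition (i) is built in, and condition (iv) follows from precisely your two estimates: $|f_L(w_1,w_2)|\le D$ and $|\vp(w_i)|\ge M+D$, so $\vp(w_i)(f_L(w_1,w_2))^{-1}$ has length at least $M$, and its length-$M$ suffix pair --- whose last letters are still distinct by maximality of the common suffix --- lies in $\mathcal{B}_L$. Your saturation idea is the right thing to do \emph{in practice} (it is what Example~\ref{exa:vp_S_set_B} carries out, and it produces far smaller forky sets than $M=DC$), but as an existence proof it needs either the uniform-length shortcut or a genuine termination argument for the truncation and pruning steps. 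Keep your estimates and replace steps (2)--(4) by the uniform-length construction; the R-forky case is then symmetric, as you say.
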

\begin{proof}
    Set $M = DC$, where $D$ is a synchronizing delay and
    $C$ is the constant from Corollary~\ref{cor:constant_C}.
    Define
    \begin{multline}
        \mathcal{B}_L = \{(w_1,w_2) : w_1,w_2 \in \mathcal{L}(\ub), |w_1| =
        |w_2| = M, \\ \text{ and the last letters of } w_1 \text{ and }
        w_2
        \text{ are different}
        \}.
    \end{multline}
    We claim that $\mathcal{B}_L$ is L-forky. Conditions $(i)$ -- $(iii)$
    from Definition~\ref{dfn:L_forky} are trivially fulfilled. It remains to prove $(iv)$.

    It is clear that we must have $|f_L(w_1,w_2)| \leq
    D$ for any $(w_1,w_2)$ since $f_L(w_1,w_2)$ without the last letter does not have any
    synchronizing point. Now, it suffices to realize that for any
    $w$ of length $M$ we have $|\vp(w)| \geq |w| + D = M + D$ and so
    $(\vp(w_1)(f_L(w_1,w_2))^{-1}, \vp(w_2)(f_L(w_1,w_2))^{-1})$
    has a suffix in $\mathcal{B}_L$. The proof of existence of an R-forky set is
    perfectly the same.
\end{proof}
This proof does not give us a general guideline how to construct
L-forky and R-forky sets since, as we have recalled earlier, we do
not have an effective algorithm computing a synchronizing delay
for a general morphism. Moreover, the L-forky set constructed in
the proof is usually too huge. As in the case of our example
morphism $\vpi{S}$ from Example~\ref{exa:vp_S_set_B} (for this
morphism $C = 2$ and $D = 3$), there usually exists a smaller
L-forky set.
\begin{rem}\label{rem:important_Krieger}
    The techniques of the proof of
    Theorem~\ref{thm:existence_of_the_graph} are the same as those
    of the proof of Lemma~11 in~\cite{Krieger2007}. This Lemma,
    however, is concerned with the notion of critical exponent.
\end{rem}
\begin{dfn}
    Let $\vp$ be a morphism with a fixed point $\ub$ and let $\mathcal{B}_L$ be an L-forky set.
    We define the directed labeled \emph{graph of left prolongations} $\GL{\mathcal{B}_L}_\vp$ as follows:
    \begin{itemize}
        \item[(i)] the set of vertices is $\mathcal{B}_L$,
        \item[(ii)] there is an edge from $(w_1,w_2)$ to $(w_3,w_4)$
        if $(w_3f_L(w_1,w_2),w_4f_L(w_1,w_2))$ is a suffix
        of $(\vp(w_1),\vp(w_2))$. The label of this edge is
        $f_L(w_1,w_2)$.
    \end{itemize}
    In the same manner we define the \emph{graph of right prolongations} $\GR{\mathcal{B}_R}_\vp$.
\end{dfn}
As a straightforward consequence of the definition of forky sets
(especially of Condition $(iv)$) we have the following property of
the graphs.
\begin{lem}
    Each vertex in a graph of left and right prolongations has
    its out-degree equal to one.

    Consequently, any long enough path in the graph ends in a
    cycle and any component contains exactly one cycle.
\end{lem}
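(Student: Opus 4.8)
The plan is to establish the two assertions in turn, the out-degree statement being the substantial one and the ``consequently'' part being a standard fact about functional graphs (digraphs in which every vertex has out-degree one).

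For the out-degree I would first note that the bound $\ge 1$ is nothing but Condition~$(iv)$ of Definition~\ref{dfn:L_forky} (and its R-analogue): that condition says exactly that some vertex $(w'_1,w'_2)\in\mathcal{B}_L$ satisfies the edge criterion out of a given $(w_1,w_2)$. The content is therefore the bound $\le 1$. I would fix $(w_1,w_2)\in\mathcal{B}_L$, set $s=f_L(w_1,w_2)$, and suppose there are edges to two vertices $(a,b)$ and $(c,d)$; by definition of the graph, $(as,bs)$ and $(cs,ds)$ are both suffixes of the pair $(\vp(w_1),\vp(w_2))$. Using that the pairs are unordered I may assume $as$ is a suffix of $\vp(w_1)$ and $bs$ of $\vp(w_2)$, and then split into the two ways the suffixes $cs,ds$ can line up: either $cs$ is a suffix of $\vp(w_1)$ and $ds$ of $\vp(w_2)$, or $cs$ is a suffix of $\vp(w_2)$ and $ds$ of $\vp(w_1)$. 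In the first case $as$ and $cs$, hence $a$ and $c$, are comparable in the suffix order, and so are $b$ and $d$; this is precisely the first alternative in the definition of L-aligned pairs. In the second case $a$ and $d$ are comparable (both are suffixes of $\vp(w_1)$) and $b$ and $c$ are comparable, which is the second alternative. Either way $(a,b)$ and $(c,d)$ are L-aligned, so by Condition~$(ii)$ they coincide and the out-degree is exactly one. The argument for $\GR{\mathcal{B}_R}_\vp$ is identical after replacing ``suffix'' by ``prefix'', $f_L$ by $f_R$, and ``L-aligned'' by ``R-aligned''.

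For the ``consequently'' part, let $\Gamma$ be such a graph, with $N$ vertices. Since every vertex has a unique outgoing edge, from any vertex $v$ there is a unique walk $v=v_0\to v_1\to v_2\to\cdots$; finiteness forces $v_i=v_j$ for some $i<j\le N$, so the walk is eventually periodic and any walk of length more than $N$ has already entered a cycle. For the statement about components I would attach to each vertex $v$ the unique cycle $\mathrm{ev}(v)$ that the walk from $v$ eventually loops around; since any edge between two vertices is the unique out-edge of its tail, $\mathrm{ev}$ takes the same value at both endpoints of every edge, hence is constant on each weakly connected component. A vertex lying on a cycle $C$ clearly has $\mathrm{ev}$-value $C$, so every cycle inside a given component equals the common $\mathrm{ev}$-value there; thus a component contains at most one cycle, and at least one, since for any of its vertices $v$ the cycle $\mathrm{ev}(v)$ lies inside it.

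The only step that needs genuine care is the case analysis in the $\le 1$ bound: one must keep track of both orderings of each unordered pair and check that each admissible overlap of the relevant suffixes falls into one of the two alternatives of the definition of L-aligned (rather than, say, only the first). Everything else is bookkeeping.
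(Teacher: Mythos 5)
Your proof is correct and follows exactly the route the paper intends: the paper states this lemma without proof as ``a straightforward consequence of the definition of forky sets (especially of Condition $(iv)$)'', and your argument supplies the details --- Condition $(iv)$ for out-degree at least one, Condition $(ii)$ via the L-aligned case analysis for out-degree at most one, and the standard functional-graph argument for the cycle structure. The case analysis showing that two edge targets would be L-aligned is carried out correctly in both orderings of the unordered pairs.
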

\begin{exa}
    The graphs $\GL{\mathcal{B}_L}_{\vpi{S}}$ and $\GR{\mathcal{B}_R}_{\vpi{S}}$ for $\vpi{S}$ and for the
    sets $\mathcal{B}_L$ and $\mathcal{B}_R$ from Example~\ref{exa:vp_S_set_B}
    are in Figure~\ref{fig:GL_GR_S}.
    \begin{figure}[ht]
        \begin{center}\includegraphics{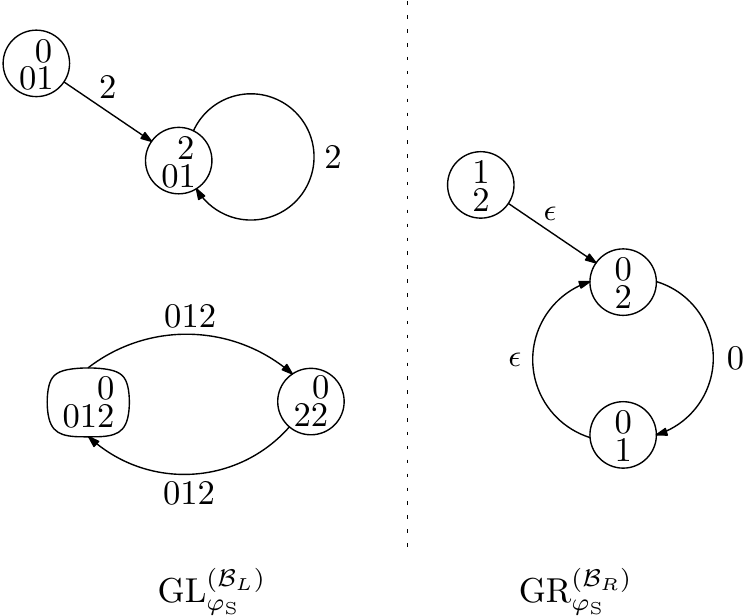}\end{center}
        \caption{The graphs $\GL{\mathcal{B}_L}$ and $\GR{\mathcal{B}_R}$ for the morphism~$\vpi{S}$.} \label{fig:GL_GR_S}
    \end{figure}
\end{exa}
\begin{dfn}
    Let $\vp$ be a morphism on $\A$ with a fixed point $\ub$
    and let $\mathcal{B}_L$ and $\mathcal{B}_R$ be L-forky and R-forky sets, respectively.
    A triplet $((w_1,w_2), v, (w_3,w_4))$ is called a \emph{bispecial (BS) triplet} in $\ub$
    if $(w_1,w_2) \in \mathcal{B}_L$, $(w_3,w_4) \in \mathcal{B}_R$ and $w_1vw_3, w_2vw_4 \in \mathcal{L}(\ub)$ or $w_1vw_4, w_2vw_3 \in \mathcal{L}(\ub)$.
\end{dfn}
\begin{lem}
    Let $\vp$ be a morphism with a fixed point $\ub$, let $\mathcal{B}_L$ be an
    L-forky and $\mathcal{B}_R$ an R-forky set and let $\mathcal{T} = ((w_1,w_2), v,
    (w_3,w_4))$ be a bispecial triplet of $\ub$. Let us denote by
    \begin{itemize}
        \item[(i)] $g_L(w_1,w_2)$ the end of the edge of $\GL{\mathcal{B}_L}_\vp$ starting in $(w_1,w_2)$,
        \item[(ii)] $g_R(w_3,w_4)$ the end of the edge of $\GR{\mathcal{B}_R}_\vp$ starting in $(w_3,w_4)$.
    \end{itemize}
    Then
    $$
        \mathcal{T}' = (g_L(w_1,w_2), f_L(w_1,w_2)\vp(v)f_R(w_3,w_4), g_R(w_3,w_4))
    $$
    is also a bispecial triplet of $\ub$.
\end{lem}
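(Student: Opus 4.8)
The plan is to verify each of the three requirements that make $\mathcal{T}'$ a BS triplet: that the new left pair $g_L(w_1,w_2)$ lies in $\mathcal{B}_L$, that the new right pair $g_R(w_3,w_4)$ lies in $\mathcal{B}_R$, and that the new factor $v' = f_L(w_1,w_2)\vp(v)f_R(w_3,w_4)$ can be prolonged on the left and right by these pairs in a matched way so that the required words are in $\mathcal{L}(\ub)$. The first two are immediate: by the definition of the graph of left prolongations, $g_L(w_1,w_2)$ is the endpoint of the unique outgoing edge from $(w_1,w_2)$, hence a vertex of $\GL{\mathcal{B}_L}_\vp$, i.e.\ an element of $\mathcal{B}_L$; symmetrically for $g_R$.

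The substance is the third point. Write $(w_1,w_2) \in \mathcal{B}_L$ and $(w_3,w_4) \in \mathcal{B}_R$, and without loss of generality assume $w_1vw_3, w_2vw_4 \in \mathcal{L}(\ub)$ (the other case is handled by swapping labels). Apply $\vp$: then $\vp(w_1)\vp(v)\vp(w_3)$ and $\vp(w_2)\vp(v)\vp(w_4)$ are in $\vp(\mathcal{L}(\ub)) \subseteq \mathcal{L}(\ub)$. Now use the edge condition of the graph: the edge from $(w_1,w_2)$ to $(w_1',w_2') := g_L(w_1,w_2)$ means that $(w_1'f_L(w_1,w_2),\, w_2'f_L(w_1,w_2))$ is a suffix of $(\vp(w_1),\vp(w_2))$. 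Writing $f_L = f_L(w_1,w_2)$ for brevity, this says $\vp(w_i)$ ends in $w_i' f_L$ for $i=1,2$. Consequently $\vp(w_i)\vp(v)\vp(w_{i+2})$ ends-then-continues as $\cdots w_i' f_L \vp(v)\vp(w_{i+2}) = \cdots w_i'\, v' \,(\text{tail of }\vp(w_{i+2}))$ once we also peel $f_R := f_R(w_3,w_4)$ off the front of $\vp(w_{i+2})$: by the R-analogue of the edge condition, $\vp(w_{i+2})$ begins with $f_R\, w_{i+2}'$ where $(w_3',w_4') := g_R(w_3,w_4)$. Splicing these, $w_1' v' w_3'$ is a factor of $\vp(w_1)\vp(v)\vp(w_3) \in \mathcal{L}(\ub)$ and likewise $w_2' v' w_4'$ is a factor of $\vp(w_2)\vp(v)\vp(w_4) \in \mathcal{L}(\ub)$. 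Hence both $w_1'v'w_3'$ and $w_2'v'w_4'$ lie in $\mathcal{L}(\ub)$, which is exactly the defining condition of a BS triplet with left pair $g_L(w_1,w_2)$, factor $v'$, and right pair $g_R(w_3,w_4)$.

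One technical subtlety to check carefully is the consistency of the pairing: when the labels in the unordered pairs are read off, one must make sure that the two "diagonals" $(w_1,w_3)$ and $(w_2,w_4)$ are carried through the $\vp$-image and the suffix/prefix peeling without getting crossed, so that the conclusion really is of the form $w_1'v'w_3',\, w_2'v'w_4' \in \mathcal{L}(\ub)$ (or the swapped form), not a mismatched combination. This is bookkeeping forced by the unordered-pair convention in the definition of the graphs and of BS triplets, and the alignment conditions $(ii)$–$(iii)$ in the definition of forky sets guarantee the relevant pairs are uniquely determined, so the matching is forced. I expect this pairing bookkeeping to be the only place where care is needed; the rest is a direct unfolding of the edge condition $(iv)$ of Definition~\ref{dfn:L_forky} together with the fact that $\vp(\mathcal{L}(\ub)) \subseteq \mathcal{L}(\ub)$.
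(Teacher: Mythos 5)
Your proof is correct. The paper states this lemma without proof, treating it as an immediate consequence of the definitions, and your argument is exactly the intended unfolding: apply $\vp$ to the two words $w_1vw_3, w_2vw_4$, use condition $(iv)$ of Definition~\ref{dfn:L_forky} (i.e.\ the edge conditions of $\GL{\mathcal{B}_L}_\vp$ and $\GR{\mathcal{B}_R}_\vp$) to peel $w_1'f_L$, $w_2'f_L$ off as suffixes and $f_Rw_3'$, $f_Rw_4'$ off as prefixes, and read off $w_1'v'w_3'$ and $w_2'v'w_4'$ as factors. The pairing subtlety you flag is even less of an issue than you suggest: since the definition of a BS triplet accepts either the straight or the crossed matching of the unordered pairs, any consistent choice of labels within $g_L(w_1,w_2)$ and $g_R(w_3,w_4)$ yields one of the two admissible forms.
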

\begin{dfn}\label{dfn:f_mapping_for_forky_sets}
    Denote $\mathcal{B} = (\mathcal{B}_L, \mathcal{B}_R)$. The bispecial triplet $\mathcal{T}'$ from the previous lemma is
    called the \emph{$f_{\mathcal{B}}$-image} of a bispecial triplet $\mathcal{T} =((w_1,w_2), v, (w_3,w_4))$.
\end{dfn}
\begin{exa}
    Consider again the morphism $\vpi{S}$. Let $\mathcal{B} = (\mathcal{B}_L,\mathcal{B}_R)$,
    where $\mathcal{B}_L$ and $\mathcal{B}_R$ are those from
    Example~\ref{exa:vp_S_set_B}. Then $((0,012),0,(0,1))$ is a bispecial
    triplet since both $001$ and $01200$ are factors. Its $f_\mathcal{B}$-image reads
    $((0,22),0120012,(0,2))$ for we have $g_L(0,012) =
    (0,22)$, $f_L(0,012) = 012$, $g_R(0,1) = (0,2)$, and $f_R(0,1) = \epsilon$.
\end{exa}
Condition~$(iv)$ from Definition~\ref{dfn:L_forky} of forky sets
allows us to get a compact formula for the
$(f_{\mathcal{B}})^n$-image, i.e., $f_\mathcal{B}$-image applied repetetively
$n$ times.
\begin{lem}\label{lem:compact_form_of_f_n_image}
    Let $\vp$ be a morphism and let $((w_1,w_2),v,(w_3,w_4))$ be a bispecial triplet for some
    forky sets $\mathcal{B} = (\mathcal{B}_L, \mathcal{B}_R)$. Then for all $n \in \N$ it
    holds that its $(f_{\mathcal{B}})^n$-image equals
    $$
        (g_L^n(w_1,w_2),f_L(\vp^{n-1}(w_1),\vp^{n-1}(w_2))\vp^n(v)f_R(\vp^{n-1}(w_3),\vp^{n-1}(w_4)),g_R^n(w_3,w_4)).
        $$
\end{lem}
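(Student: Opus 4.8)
The plan is to prove the formula by induction on $n$, unwinding the definition of the $f_{\mathcal{B}}$-image and repeatedly invoking Condition $(iv)$ of Definition~\ref{dfn:L_forky}. The base case $n=0$ is trivial (the formula reads $((w_1,w_2),v,(w_3,w_4))$ since $f_L,f_R$ applied to a pair with distinct last letters of equal things... actually for $n=1$ we recover exactly the definition of the $f_{\mathcal{B}}$-image, so $n=1$ is the natural base case). The heart of the matter is the inductive step, and the only real content there is a ``shift'' identity for the labels $f_L$: one needs that
$$
f_L(w_1,w_2)\,\vp\bigl(f_L(\vp^{n-1}(w_1),\vp^{n-1}(w_2))\bigr) = f_L(\vp^{n}(w_1),\vp^{n}(w_2)),
$$
and symmetrically for $f_R$ with $\vp$ applied on the prefix side, $\vp\bigl(f_R(\vp^{n-1}(w_3),\vp^{n-1}(w_4))\bigr)f_R(w_3,w_4) = f_R(\vp^{n}(w_3),\vp^{n}(w_4))$ — wait, one must be careful about the order in which the prepended pieces accumulate. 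Let me reorganize.

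First I would record the correct ``accumulation'' lemma. Write $\ell_k = f_L(\vp^{k-1}(w_1),\vp^{k-1}(w_2))$ for the label read along the $k$-th step of the path in $\GL{\mathcal{B}_L}_\vp$ starting from $(w_1,w_2)$ (using Condition $(iv)$, which guarantees the path is well-defined and that after $k$ steps we sit at the vertex $g_L^k(w_1,w_2)$ with $(g_L^k(w_1,w_2))$ suffixing $(\vp^k(w_1),\vp^k(w_2))$ after the accumulated label is stripped). The key claim is: the word prepended to $\vp^n(v)$ after $n$ applications of $f_{\mathcal{B}}$ equals $\ell_1\,\vp(\ell_1')\cdots$ — more precisely, by one more application the new prefix is $\ell_1 \cdot \vp(\text{old prefix})$, so after $n$ steps it is
$$
\ell_1\,\vp(\ell_1)\,\vp^2(\ell_1)\cdots \quad\text{— no.}
$$
I should instead directly claim that this accumulated left prefix is precisely $f_L(\vp^{n-1}(w_1),\vp^{n-1}(w_2))$, which is exactly what the statement asserts, and prove it by showing $f_L(\vp^{n}(w_1),\vp^{n}(w_2)) = f_L(w_1,w_2)\cdot\vp\bigl(f_L(\vp^{n-1}(w_1),\vp^{n-1}(w_2))\bigr)$. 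This last equality is the crux: it says that the longest common suffix of $\vp^n(w_1)$ and $\vp^n(w_2)$ is obtained by taking the $\vp$-image of the longest common suffix $s$ of $\vp^{n-1}(w_1),\vp^{n-1}(w_2)$ and then completing with the longest common suffix of $\vp(a),\vp(b)$, where $a,b$ are the letters immediately preceding $s$ in $\vp^{n-1}(w_1)$ and $\vp^{n-1}(w_2)$ respectively. These letters are distinct — that is exactly Condition $(i)$ of the forky set, which propagates: $g_L^{n-1}(w_1,w_2) \in \mathcal{B}_L$ has distinct last letters, and (by Condition $(iv)$) this pair prefixed by $s$ is a suffix of $(\vp^{n-1}(w_1),\vp^{n-1}(w_2))$, so $a\ne b$, hence $\vp(a)$ and $\vp(b)$ have no common last letter beyond... — they may still share a common suffix, and that common suffix is $f_L$ of the pair of last letters, which is absorbed correctly. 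Injectivity of $\vp$ on $\mathcal{L}(\ub)$ is what prevents the common suffix of $\vp^n(w_1),\vp^n(w_2)$ from ``reaching back'' past the $\vp$-image of $s$ into a region where the letters already differ, so the decomposition is clean.

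Once that shift identity for $f_L$ (and its mirror for $f_R$) is in place, the inductive step is bookkeeping: assuming the $(f_{\mathcal{B}})^{n-1}$-image is $(g_L^{n-1}(w_1,w_2),\, f_L(\vp^{n-2}(w_1),\vp^{n-2}(w_2))\,\vp^{n-1}(v)\,f_R(\vp^{n-2}(w_3),\vp^{n-2}(w_4)),\, g_R^{n-1}(w_3,w_4))$, apply $f_{\mathcal{B}}$ once: the new left vertex is $g_L(g_L^{n-1}(w_1,w_2)) = g_L^n(w_1,w_2)$, the new factor is $f_L\bigl(g_L^{n-1}(w_1,w_2)\bigr)\,\vp(\text{old factor})\,f_R\bigl(g_R^{n-1}(w_3,w_4)\bigr)$, and by Condition $(iv)$ applied at the vertex $g_L^{n-1}(w_1,w_2)$ together with the shift identity, $f_L(g_L^{n-1}(w_1,w_2)) \cdot \vp\bigl(f_L(\vp^{n-2}(w_1),\vp^{n-2}(w_2))\bigr) = f_L(\vp^{n-1}(w_1),\vp^{n-1}(w_2))$, and $\vp(\vp^{n-1}(v)) = \vp^n(v)$; symmetrically on the right. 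This yields exactly the claimed $(f_{\mathcal{B}})^n$-image.

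I expect the main obstacle to be the shift identity $f_L(\vp^n(w_1),\vp^n(w_2)) = f_L(w_1,w_2)\,\vp\bigl(f_L(\vp^{n-1}(w_1),\vp^{n-1}(w_2))\bigr)$ — specifically, justifying rigorously that the longest common suffix does not extend into the $\vp$-image of the differing letters, which requires injectivity of $\vp$ on the language (so that a $\vp$-image of one letter cannot be a proper suffix of a $\vp$-image of a different letter in a way compatible with being inside a factor of $\ub$), and using Condition $(i)$ to know the relevant pair of letters is genuinely distinct. Everything else is straightforward unwinding of the definitions of $\GL{\mathcal{B}_L}_\vp$, $g_L$, $g_R$, and the $f_{\mathcal{B}}$-image.
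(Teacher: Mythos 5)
The paper states this lemma without proof, giving only the pointer that Condition~$(iv)$ of Definition~\ref{dfn:L_forky} is what makes the compact formula work; your induction is exactly the argument that pointer suggests, and in its final form it is correct. The crux is, as you say, the shift identity $f_L(\vp^{n-1}(w_1),\vp^{n-1}(w_2)) = f_L(g_L^{n-1}(w_1,w_2))\,\vp\bigl(f_L(\vp^{n-2}(w_1),\vp^{n-2}(w_2))\bigr)$, proved by carrying along the auxiliary fact that $g_L^{k}(w_1,w_2)$, suffixed by $f_L(\vp^{k-1}(w_1),\vp^{k-1}(w_2))$, is a suffix of $(\vp^{k}(w_1),\vp^{k}(w_2))$. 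Three small cautions. First, the version of the identity you write mid-proposal, with leading factor $f_L(w_1,w_2)$ instead of $f_L(g_L^{n-1}(w_1,w_2))$, is false; you do correct it in the last paragraph, but only the corrected form should survive into a write-up. Second, what blocks the longest common suffix from extending further left is not the pair of single letters immediately preceding the stripped label: it is the pair supplied by Condition~$(iv)$ applied at the vertex $g_L^{n-1}(w_1,w_2)$ (namely $g_L^{n}(w_1,w_2)$), which by Condition~$(i)$ has distinct last letters sitting exactly at that position; consequently the completion is $f_L$ of that whole pair of \emph{words}, not of two letters (these differ precisely when $\vp$ is not suffix-free, which is the case the forky-set machinery exists to handle). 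Third, injectivity of $\vp$ plays no role here -- Conditions~$(i)$ and~$(iv)$ alone suffice.
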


\subsection{Initial BS factors}

From the previous subsection, we know how to get a sequence of BS
factors from some starting one: we just apply
$f_\mathcal{B}$-image repetitively. The goal of the present
subsection is to prove that for circular systems there exists a
finite set of initial BS factors (triplets) such that any other BS factor is
an $(f_\mathcal{B})^n$-image of one of them.
\begin{dfn}
    Let $\vp$ be a morphism injective
    on $\mathcal{L}(\ub)$, where $\ub$ is its fixed point, let $\mathcal{B}_L$ and $\mathcal{B}_R$ be L- and R-forky sets,
    and $\mathcal{T} = ((w_1,w_2), v, (w_3,w_4))$ a bispecial triplet.
    Assume, without loss of generality, that $w_1vw_3, w_2vw_4 \in \mathcal{L}(\ub)$.
    An ordered pair of factors $(v_1,v_2)$ is called a \emph{BS-synchronizing point} of $\mathcal{T}$
    if $v = v_1v_2$ and
    \begin{multline*}
        \forall u_1, u_2, u_3, u_4 \in \mathcal{A}^*, (u_1 w_1 v w_3 u_3, u_2 w_2 v w_4 u_4 \in \vp(\mathcal{L}(\ub)) \Rightarrow \\
        u_1 w_2 v_1,u_2 w_2 v_1, v_2 w_3 u_3,v_2 w_4 u_4  \in \vp(\mathcal{L}(\ub)).
    \end{multline*}
    We denote this by $v = v_1|_{bs}v_2$.
\end{dfn}
The notion of BS-synchronizing point is weaker than the one of
synchronizing point: it holds that if $v = v_1|_s v_2$, then $v =
v_1 |_{bs} v_2$. The other direction is not true as it follows
from this example:
\begin{exa}
    Given a morphism $0 \mapsto 010, 1 \mapsto 210, 2 \mapsto
    220$, the factor $0$ has no synchronizing point. On the other
    hand, if we take it as the bispecial triplet $((1,2),0,(0,2))$
    it has the BS-synchronizing point $0|_{bs}$.
\end{exa}
\begin{dfn}
    Let $\vp$ be a morphism with a fixed point $\ub$ which is injective
    on $\mathcal{L}(\ub)$. A bispecial triplet $\mathcal{T} = ((w_1,w_2), v, (w_3,w_4))$
    is said to be \emph{initial} if it does not have any
    BS-synchronizing point.
\end{dfn}
\begin{dfn}\label{dfn:synchro_terminology}
    Let $\mathcal{T} = ((w_1,w_2), v, (w_3,w_4))$ be a bispecial triplet which is not initial and let $(v_1,v_2), (v_3,v_4), \ldots,
    (v_{2m-1},v_{2m})$ be all its BS-synchronizing points such
    that $|v_1| < |v_3| < \cdots < |v_{2m-1}|$. The factor
    $v_1$ is said to be the \emph{non-synchronized prefix}, $v_{2m}$ the \emph{non-synchronized
    suffix} and the factor $(v_1)^{-1}v(v_{2m})^{-1}$ is called the \emph{synchronized factor} of $\mathcal{T}$.
\end{dfn}
Now we state the main theorem of this section.
\begin{thm}
     Let $(\mathcal{A},\vp, a)$ be a circular non-pushy D0L-system,
     $\mathcal{B}_L$ and  $\mathcal{B}_R$ its L-forky and R-forky set, and $\ub
     = \vp^\omega(a)$ infinite. Then there exists a finite set $\mathcal{I}$  of
     bispecial triplets such that for any bispecial factor $v$ there exist a 
     bispecial triplet $\mathcal{T} \in \mathcal{I}$ and $n \in \N$ such  that
     $((w_1,w_2),v,(w_3,w_4)) = (f_{\mathcal{B}})^n(\mathcal{T})$ for some 
     $(w_1,w_2) \in \mathcal{B}_L$ and $(w_3,w_4) \in \mathcal{B}_R$.
\end{thm}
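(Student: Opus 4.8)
The idea is to show that every bispecial factor, once it is long enough, is an $f_{\mathcal B}$-image of a strictly shorter bispecial triplet, so that by induction it reduces to one of finitely many ``short'' bispecial triplets, which we take to be $\mathcal I$. Concretely, given a bispecial triplet $\mathcal T = ((w_1,w_2),v,(w_3,w_4))$ that is \emph{not} initial, let $(v_1,v_2)$ be its shortest and $(v_3,v_4)$ its longest BS-synchronizing point (in the terminology of Definition~\ref{dfn:synchro_terminology}), and let $\widetilde v = (v_1)^{-1}v(v_{2m})^{-1}$ be the synchronized factor. Because $\widetilde v$ together with the pair of contexts coming from $w_1,w_2$ on the left and $w_3,w_4$ on the right sits entirely inside a region with synchronizing bars at both ends, injectivity of $\vp$ on $\mathcal L(\ub)$ lets us pull $\widetilde v$ and its bordering prolongations back through $\vp$: there is a factor $v'$ with $\vp(v') \supseteq \widetilde v$ appearing in the right position, and pairs of prolongations of $v'$ realizing left extensions that, after applying $\vp$ and re-aligning via $f_L$, reproduce $(w_1,w_2)$ — i.e., $(w_1,w_2) = g_L(w_1',w_2')$ — and similarly on the right. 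Using the compact formula of Lemma~\ref{lem:compact_form_of_f_n_image} this says exactly that $\mathcal T$ is the $f_{\mathcal B}$-image of a bispecial triplet $\mathcal T' = ((w_1',w_2'),v',(w_3',w_4'))$.

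**Key steps, in order.** First, I would make precise the ``pull-back'' construction: given a non-initial BS triplet, use the outermost BS-synchronizing points to cut off the non-synchronized prefix and suffix, apply Definition~\ref{dfn:synchro_poit_injective} (BS-synchronizing version) together with injectivity of $\vp$ to obtain a unique preimage factor $v'$ and candidate prolongation pairs, then invoke conditions (ii)--(iii) of Definition~\ref{dfn:L_forky} to replace those prolongation pairs by the unique L-aligned (resp.\ R-aligned) members $(w_1',w_2') \in \mathcal B_L$ and $(w_3',w_4') \in \mathcal B_R$. Second, verify that $\mathcal T'$ is genuinely a BS triplet (its contexts extend correctly in $\ub$) and that $f_{\mathcal B}(\mathcal T') = \mathcal T$ — this is where the forky conditions (i) and (iv) and the definitions of $g_L,g_R,f_L,f_R$ are used to check the re-alignment is exactly the one built into the $f$-image. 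Third, establish the length decrease: since the non-synchronized prefix and suffix have bounded length (bounded by the synchronizing delay $D$, because a synchronized factor without a synchronizing point can have length at most $D$, by circularity), and since $|\vp(v')| \ge |v'|$ with genuine growth once $|v'| \ge C$ by Corollary~\ref{cor:constant_C}, we get $|v'| < |v|$ whenever $|v|$ exceeds an explicit bound (roughly $2D + $ something comparable to $C$ times the relevant multiplicities). Fourth, iterate: any BS triplet with $|v|$ above that bound is an $f_{\mathcal B}$-image of a shorter one, so repeatedly pulling back terminates at a BS triplet whose middle factor has length below the bound; collect all BS triplets with $|v|$ below this explicit bound (there are finitely many, as $\ub$ has finitely many factors of each length and $\mathcal B_L,\mathcal B_R$ are finite) into $\mathcal I$. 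Finally, note that every BS \emph{factor} $v$ (as opposed to triplet) is the middle coordinate of at least one BS triplet — pick a pair of left extensions with distinct last letters and align to $\mathcal B_L$ via (iii), similarly on the right — so the statement about factors follows from the statement about triplets.

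**Main obstacle.** The delicate point is the pull-back in the first step: I must show that the BS-synchronizing points really do let me lift not just the synchronized factor $\widetilde v$ but also enough of its left/right context to pin down prolongation pairs that land in $\mathcal B_L$ and $\mathcal B_R$ after one application of $f_{\mathcal B}$, and that the particular re-alignment produced coincides with $g_L$ and $g_R$ rather than some other L-aligned pair — uniqueness here rests on conditions (ii)--(iii) of the forky definition, and getting the bookkeeping of which longest common suffix/prefix gets cut and reattached to line up with $f_L,f_R$ is fiddly. A secondary subtlety is ensuring the length bound is uniform: the growth factor from Corollary~\ref{cor:constant_C} is only guaranteed once the argument is at least $C$ long, so for the finitely many short cases one argues directly, and one must be careful that ``non-initial'' together with the length bound genuinely forces a strict decrease rather than merely a non-increase. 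Once the pull-back and the length estimate are in place, the induction and the passage from triplets to factors are routine.
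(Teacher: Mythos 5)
Your proposal is correct and follows essentially the same route as the paper: the heart of both arguments is the pull-back of a non-initial bispecial triplet through $\vp$ via its BS-synchronizing points, with the synchronized factor having a unique $\vp$-preimage and the non-synchronized prefix and suffix being identified with $f_L$- and $f_R$-labels of edges in the prolongation graphs. The only real divergence is in bookkeeping: the paper takes $\mathcal{I}$ to be the set of initial triplets (finite by circularity, since they are no longer than the synchronizing delay) and leaves the termination of the preimage chain implicit, whereas you take $\mathcal{I}$ to be all triplets below an explicit length bound and justify termination via the strict length decrease coming from Corollary~\ref{cor:constant_C} --- a step the paper glosses over and which is the one place non-pushiness actually enters.
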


\begin{proof}

    Let $\mathcal{I}$ be the set of initial bispecial triplets. The
    finiteness of $\mathcal{I}$ is a direct consequence of the
    definition of circularity: elements of $\mathcal{I}$ cannot be
    longer than the synchronizing delay. The rest of the statement
    follows from the fact that any non-initial triplet has at least
    one $f_{\mathcal{B}}$-preimage.

    To prove this, it suffices to realize that the synchronized
    factor of $((w_1,w_2),v,(w_3,w_4))$ has unique $\vp$-preimage $v'$ (possibly the empty word) and
    that the non-synchronized prefix (resp. suffix) must be equal
    to $f_L(w'_1,w'_2)$ for some $(w'_1,w'_2) \in \mathcal{B}_L$
    (resp. to $f_R(w'_3,w'_4)$ for some $(w'_3,w'_4) \in
    \mathcal{B}_R$).

\end{proof}
\begin{exa}
    We now find the set $\mathcal{I}$ for $\ubi{S}$ with $\mathcal{B} =
    (\mathcal{B}_L,\mathcal{B}_R)$, where $\mathcal{B}_L$ and $\mathcal{B}_R$ are from
    Example~\ref{exa:vp_S_set_B}, the graphs of prolongations are in Figure~\ref{fig:GL_GR_S}. The only BS factors without
    synchronizing points are $\epsilon$ and $0$ since any other factor
    contains the letter~$2$ (and hence it has a synchronizing point) or is not
    BS. It remains to find all corresponding triplets:
    \begin{center}
    \begin{tabular}{ c c c c }
      % after \\: \hline or \cline{col1-col2} \cline{col3-col4} ...
      $((0,01),\epsilon,(1,2))$, & $((0,01),\epsilon,(0,2))$, & $((0,012),\epsilon,(0,1))$, & $((0,012),\epsilon,(0,2))$, \\
      $((0,012),\epsilon,(1,2))$, & $((0,22),\epsilon,(0,1))$, & $((2,01),\epsilon,(0,2))$, & $((0,012),0,(0,1))$.
    \end{tabular}\\[2mm]
    \end{center}
    Since we are usually interested in nonempty BS factors, we can
    replace the bispecial triplets containing $\epsilon$ with their
    $f_\mathcal{B}$-images and get:
    \begin{center}
    \begin{tabular}{ c c c }
      % after \\: \hline or \cline{col1-col2} \cline{col3-col4} ...
      $((2,01),2,(0,2))$, & $((2,01),20,(0,1))$, & $((0,22),012,(0,2))$, \\
      $((0,22),0120,(0,1))$, & $((0,012),012,(0,2))$, & $((0,012),1,(0,1))$.
    \end{tabular}\\[2mm]
    \end{center}
    There are only 6 bispecial triplets since $((0,012),\epsilon,(0,1))$ and
    $((0,012),\epsilon,(1,2))$ have the same $f_\mathcal{B}$-image and so do $((0,01),\epsilon,(0,2))$ and
    $((2,01),\epsilon,(0,2))$.
\end{exa}

\section{Infinite special branches} \label{sec:infinite_branches}

In the preceding section we have described a tool allowing us to
find all BS factors. It requires some effort to construct the
graphs $\mathrm{GL}$ and $\mathrm{GR}$ and to find all initial
bispecial triplets, but it can be done by an algorithm. However,
even if we have all these necessities in hand, it may be still a
long way to the complete knowledge of the structure of all BS
factors. Nevertheless, there is a class of special factors which
can be identified directly from the graphs $\mathrm{GL}$ and
$\mathrm{GL}$, namely, the prefixes (or suffixes) of the so-called
infinite LS (or RS) branches.
\begin{dfn}
    An infinite word $\mathbf{w}$ is an \emph{infinite LS
    branch} of an infinite word $\ub$ if each prefix of $\mathbf{w}$ is~a~LS
    factor of $\ub$. We put
    $$
        \Le(\wb) = \bigcap_{v \text{ prefix of } \wb}\Le(v).
    $$
    Infinite RS branches are defined in the same manner, only that they are
    infinite to the right.
\end{dfn}
Here are some (almost) obvious statements on infinite special
branches in an infinite word:
\begin{pro} \label{pro:inf_LS_branch} \
    \begin{itemize}
        \item[(i)]  If $\ub$ is eventually periodic, then there is
                    no infinite LS branch of $\ub$,
        \item[(ii)] if $\ub$ is aperiodic, then there exists at
        least one infinite LS branch of $\ub$,
        \item[(iii)] if $\ub$ is a fixed point of a primitive
                     morphism, then the number of infinite LS
                     branches is bounded.
    \end{itemize}
\end{pro}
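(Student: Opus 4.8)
The plan is to handle the three items separately, as they require quite different ingredients. For (i), suppose $\ub$ is eventually periodic, say $\ub = x w^\omega$ with $|w| = p$. Then for any factor $v$ of $\ub$ that is long enough (longer than $|x| + p$), every occurrence of $v$ lies inside the periodic tail, so the letter immediately to the left of an occurrence of $v$ is determined by the position of that occurrence modulo $p$; since $v$ is long, all its occurrences are ``in phase'', and hence $v$ has a unique left extension. Thus no sufficiently long factor is LS, so an infinite LS branch (all of whose prefixes are LS) cannot exist. I would write this out carefully using the standard fact that in $x w^\omega$ a factor of length $\geq |x| + p$ occurs only with one residue modulo $p$ once we pass $x$.

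For (ii), assume $\ub$ is aperiodic. The natural approach is a König's-lemma / compactness argument: build an infinite tree whose vertices at level $n$ are the LS factors of length $n$, with $u$ a child of $u'$ when $u'$ is the length-$(n-1)$ suffix... wait — actually I want prefixes, so $u$ (length $n$) is a child of its length-$(n-1)$ prefix $u'$, provided both are LS. The key point is that this tree has vertices at every level: if there were a maximal length $N$ beyond which no factor is LS, then every factor of length $N$ has a unique left extension, which forces the set of factors of each length $\geq N$ to have constant cardinality, hence the factor complexity is eventually constant, hence $\ub$ is eventually periodic (by the Morse–Hedlund theorem), a contradiction. So the tree is infinite; since $\ub$ has finitely many factors of each length, the tree is finitely branching, and König's lemma yields an infinite branch, which is exactly an infinite LS branch. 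I should be a little careful that ``every factor of length $N$ has a unique left extension'' really does propagate upward to give constant complexity — this is the standard argument that $C(n+1) - C(n) = \sum_{|v|=n}(\#\Rex(v) - 1)$ (or its left analogue), and it is the one genuinely load-bearing classical fact here.

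For (iii), assume $\ub = \vp^\omega(a)$ with $\vp$ primitive. By Theorem~\ref{thm:mosse} the associated D0L-system is circular, with some synchronizing delay $D$. The idea is that an infinite LS branch $\wb$ is, by the $f$-image machinery of the previous sections (the ``inverse'' of applying $\vp$), essentially generated by iterating $g_L$ on pairs of left prolongations, so its long prefixes are desynchronized only in a bounded initial segment; more concretely, once a prefix of $\wb$ is longer than $D$ it contains a synchronizing point, so $\wb$ itself has a $\vp$-preimage that is again (up to a bounded prefix) an infinite LS branch, and this descent stabilizes. The number of infinite LS branches is then bounded by the number of cycles in the graph $\GL{\mathcal{B}_L}_\vp$ of left prolongations (which is finite) times the bounded number of ways to ``start'' such a branch before reaching a cycle. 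I expect this third part to be the main obstacle: the honest argument needs that any infinite LS branch, read against the synchronized decomposition into $\vp$-images, eventually coincides with an infinite path in $\GL{\mathcal{B}_L}_\vp$, and since each component of that graph contains exactly one cycle (by the out-degree-one lemma) there are only finitely many eventual behaviours; pinning down the ``bounded transient part'' rigorously, using the synchronizing delay $D$ and the constant $C$ from Corollary~\ref{cor:constant_C} to bound how far from a cycle a genuine LS branch can begin, is where the real work lies. Alternatively one can invoke the known linear bound on factor complexity of fixed points of primitive morphisms together with $C(n+1)-C(n) = \sum_{|v|=n}(\#\Le(v)-1)$, which immediately bounds the number of length-$n$ LS factors and hence (again by König) the number of infinite LS branches; I would likely present this cleaner complexity-based argument as the proof of (iii) and relegate the graph-theoretic picture to a remark.
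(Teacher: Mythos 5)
Your treatments of items (ii) and (iii) coincide with the paper's own proof. For (ii) the paper applies K\"onig's infinity lemma to exactly the prefix-ordered tree of LS factors you describe (you additionally justify that every level of the tree is nonempty via the Morse--Hedlund theorem, a step the paper leaves implicit); for (iii) the paper's entire argument is the ``cleaner complexity-based'' one you end up preferring, namely that the first difference of complexity of a fixed point of a primitive morphism is bounded, which bounds the number of LS factors of each length and hence the number of infinite LS branches. The graph-theoretic descent through $\GL{\mathcal{B}_L}_\vp$ that you anticipate as ``the main obstacle'' is not used by the paper and is unnecessary here.

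Item (i) is where there is a genuine gap. The step ``the letter immediately to the left of an occurrence of $v$ is determined by the position of that occurrence modulo $p$'' fails for the occurrence of $v$ that begins exactly where the periodic tail begins: its left neighbour is the last letter of the preperiod $x$, which need not agree with the letter preceding the occurrence one period further along. Concretely, take $\ub = 01^{\omega}$ (the fixed point of $0\mapsto 01$, $1\mapsto 1$ starting in $0$): every factor $1^{n}$ satisfies $\Le(1^{n})=\{0,1\}$, so $1^{\omega}$ is an infinite LS branch of an eventually periodic word, and item (i) as literally stated fails. Your argument is valid for purely periodic (more generally, recurrent) $\ub$, where every occurrence together with its left neighbour lies in the periodic part; to salvage the general statement one must either add such a hypothesis or explicitly treat the single boundary occurrence. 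The paper gives you no guidance here --- it dismisses (i) as obvious --- so this is as much a defect of the statement as of your sketch, but it should be flagged rather than written out ``carefully'' as if the standard in-phase argument covered it.
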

\begin{proof}
Item $(i)$ is obvious, $(iii)$ is a direct consequence of the
fact that the first difference of complexity is
bounded~\cite{Cassaigne1996}. The proof of item $(ii)$ is due to
the famous K\"{o}nig's infinity lemma~\cite{Konig1936} applied on
sets $V_1, V_2, \ldots$, where the set $V_k$ comprises all LS
factors of length $k$ and where $v_1 \in V_i$ is connected by an
edge with $v_2 \in V_{i+1}$ if $v_1$ is prefix of $v_2$.
\end{proof}

Imagine now that we have an L-forky set $\mathcal{B}_L$ and an
infinite LS branch $\wb$. There must exist $(v_1,v_2) \in
\mathcal{B}_L$ such that $v_1w$ and $v_2w$ are factors for any
prefix $w$ of $\wb$. Such a pair is called an infinite LS pair.
\begin{dfn}
    Let $(v_1,v_2)$ be an element of an L-forky set corresponding
    to a fixed point $\ub$ of a morphism $\vp$. The ordered pair
    $((v_1,v_2),\wb)$ is called an \emph{infinite LS pair} if for any
    prefix $w$ of $\wb$ the words $v_1w$ and $v_2w$ are factors of
    $\ub$.

    Further, we define the \emph{$f_{\mathcal{B}_L}$-image} of an infinite LS pair
    $((v_1,v_2),\wb)$ as the infinite LS pair $((v'_1,v'_2),\wb')$, where
    $(v'_1,v'_2) = g_L(v_1,v_2)$ and $\wb' = f_L(v_1,v_2)\vp(\wb)$.
\end{dfn}
Having the $f_{\mathcal{B}_L}$-image of an infinite LS branch, we are
again interested in its $f_{\mathcal{B}_L}$-preimage.
\begin{lem}\label{lem:f_preimage_of_infinite_pairs}
    Let $(\mathcal{A},\vp,a), a  \in \mathcal{A},$ be a circular D0L-system with
    an infinite fixed point $\ub = \vp^\omega(a)$ and let
    $\mathcal{B}_L$ be its L-forky set. Then any infinite LS pair is the $f_{\mathcal{B}_L}$-image of a unique infinite LS pair.
\end{lem}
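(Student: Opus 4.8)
The plan is to imitate the proof of the theorem on initial bispecial triplets in Section~\ref{sec:main_proofs}, the finite synchronized factor being replaced here by an infinite synchronized tail. If $\ub$ has no infinite LS branch the statement is vacuous, so fix an infinite LS pair $((v_1,v_2),\wb)$; recall that circularity makes $\vp$ injective on $\mathcal{L}(\ub)$. The first ingredient is synchronization: all prefixes of the right-infinite word $v_1\wb$ — and likewise of $v_2\wb$ — lie in $\mathcal{L}(\ub)$, so by Definition~\ref{dfn:circular_D0L_system} every factor of $v_1\wb$ of length $D$ carries a synchronizing point, and since a synchronizing point of a factor is one of every word containing it, $v_1\wb$ must in fact have synchronizing points at positions tending to infinity (otherwise the length-$D$ factor starting at a large position $N$ would force one beyond $N$). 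Moreover, for $m\ge D$ the question whether position $m$ of $\wb$ is synchronizing inside $v_1\wb$ is decided by a bounded window of $\wb$ around $m$, hence is answered the same for $v_2\wb$; so past some threshold $v_1\wb$ and $v_2\wb$ are synchronized at the same positions of the common tail $\wb$.

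Next I would desubstitute at the first such position. Let $m_0\ge D$ be the least position of $\wb$ that is synchronizing in $v_1\wb$ (hence also in $v_2\wb$), let $t$ be the length-$m_0$ prefix of $\wb$, and use the defining property of synchronizing points together with the injectivity and non-erasingness of $\vp$ to obtain a unique factorization $\wb=t\,\vp(\wb_0)$ with $\wb_0$ a right-infinite word whose prefixes all lie in $\mathcal{L}(\ub)$. This $\wb_0$ is again an infinite LS branch: the passage $\wb\mapsto\wb_0$ is the inverse of Cassaigne's desubstitution~\cite{Cassaigne1994}, and it carries the left extensions of a prefix of $\wb$ to those of the corresponding prefix of $\wb_0$, so left-specialness is preserved. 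The word $t$ is the ``non-synchronized prefix'' of $\wb$.

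Finally I would pin down the forky pair. Since $\wb_0$ is an infinite LS branch, by the discussion preceding the lemma there is $(u_1,u_2)\in\mathcal{B}_L$ with $((u_1,u_2),\wb_0)$ an infinite LS pair, and Condition~(ii) of Definition~\ref{dfn:L_forky} shows it is the only such pair. Exactly as in the cited theorem — using Conditions~(ii)--(iv) of Definition~\ref{dfn:L_forky} and the fact that $v_1$ and $v_2$ end in distinct letters, which forces the longest common suffix of $\vp(u_1)$ and $\vp(u_2)$ to be $t$ — one checks that $f_L(u_1,u_2)=t$ and that $(v_1t,v_2t)$ is a suffix of $(\vp(u_1),\vp(u_2))$, i.e.\ $g_L(u_1,u_2)=(v_1,v_2)$. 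Therefore $f_{\mathcal{B}_L}\big((u_1,u_2),\wb_0\big)=\big((v_1,v_2),\,t\,\vp(\wb_0)\big)=\big((v_1,v_2),\wb\big)$, which proves existence. For uniqueness, any $f_{\mathcal{B}_L}$-preimage yields a decomposition of $\wb$ of the shape $f_L(u_1',u_2')\vp(\wb_0')$; the canonicity from the first two steps forces $\wb_0'=\wb_0$ and $f_L(u_1',u_2')=t$, whereupon $((u_1',u_2'),\wb_0)$ is an infinite LS pair and Condition~(ii) gives $(u_1',u_2')=(u_1,u_2)$.

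The main obstacle is the synchronization step: one must show that circularity forces a $\vp$-block decomposition of the infinite word $\wb$ that does not depend on whether $\wb$ is read as a tail of $v_1\wb$ or of $v_2\wb$, so that $\wb=t\,\vp(\wb_0)$ is well defined and canonical. Once this is secured, the remaining identifications are exactly the forky-set bookkeeping already carried out for initial bispecial triplets, now with a single finite non-synchronized end rather than two.
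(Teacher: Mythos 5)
Your proposal is correct and follows essentially the same route as the paper's (much terser) proof: stabilize the left-most synchronizing point of long prefixes of $\wb$, identify the non-synchronized prefix with the label $f_L(u_1,u_2)$ of the edge of $\GL{\mathcal{B}_L}_\vp$ ending at $(v_1,v_2)$, and desubstitute the remaining tail uniquely via injectivity of $\vp$. You supply more detail than the paper does on why the synchronization of $v_1\wb$ and $v_2\wb$ agrees and on the uniqueness of the preimage vertex, but the underlying argument is the same.
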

\begin{proof}
    Let $((v_1,v_2),\wb)$ be an infinite LS pair and let $D$ be a synchronizing
    delay of $\vp$.  Then any prefix of $w$ of length at least $D$
    has the same left-most synchronizing point
    $(w_1,(w_1)^{-1}w)$. Since such $w$ is LS, then $w_1$ must be
    a label of an edge in $\GL{\mathcal{B}_L}_\vp$ whose end-vertex is
    $(v_1,v_2)$ and starting one in $(v'_1,v'_2)$. The infinite word
    $(w_1)^{-1}\wb$ must have a unique $\vp$-preimage $\wb'$.
%    , see
%    also Figure~\ref{fig:f_preimage_of_infinite_branch}.
%    \begin{figure}[h]       
%		\begin{center}\includegraphics{f_preimage_of_infinite_branch.pdf}\end{center}
%        \caption{Construction of an $f_{\mathcal{B}_L}$-preimage of an infinite
%        LS pair.} \label{fig:f_preimage_of_infinite_branch}
%    \end{figure}
\end{proof}
Since any infinite LS pair $((v_1,v_2),\wb)$ has an
$f_{\mathcal{B}_L}$-preimage, the in-degree of the vertex
$(v_1,v_2)$ in the graph of left prolongations $\GL{\mathcal{B}_L}_\vp$ must be
at least one.
\begin{cor}
    Let $(\mathcal{A},\vp,a)$ be a circular D0L-system with a fixed point
    $\ub$ and an L-forky set $\mathcal{B}_L$. If $((v_1,v_2),\wb)$ is an
    infinite LS pair then $(v_1,v_2)$ is a vertex of a cycle in
    $\GL{\mathcal{B}_L}_\vp$.
\end{cor}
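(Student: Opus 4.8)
The plan is to show that the out-degree-one structure of $\GL{\mathcal{B}_L}_\vp$ (established in the lemma preceding the definition of BS triplets) forces any vertex that carries an infinite LS pair to lie on the unique cycle of its component. The key is Lemma~\ref{lem:f_preimage_of_infinite_pairs}: every infinite LS pair is the $f_{\mathcal{B}_L}$-image of a \emph{unique} infinite LS pair. Thus, starting from $((v_1,v_2),\wb)$ we can take $f_{\mathcal{B}_L}$-preimages indefinitely, producing an infinite sequence of infinite LS pairs
$$
    \cdots \mapsto ((v_1^{(2)},v_2^{(2)}),\wb^{(2)}) \mapsto ((v_1^{(1)},v_2^{(1)}),\wb^{(1)}) \mapsto ((v_1,v_2),\wb).
$$
In the graph $\GL{\mathcal{B}_L}_\vp$ this corresponds to an infinite backward walk $\cdots \to (v_1^{(2)},v_2^{(2)}) \to (v_1^{(1)},v_2^{(1)}) \to (v_1,v_2)$ ending at $(v_1,v_2)$. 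Since $\mathcal{B}_L$ is finite, such an infinite backward walk must revisit some vertex, hence must enter a cycle; and because every vertex has out-degree exactly one, a vertex lying on a cycle cannot have an edge leaving the cycle. Therefore the whole backward walk, in particular $(v_1,v_2)$ itself, lies on that cycle.

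First I would make the backward walk precise: by Lemma~\ref{lem:f_preimage_of_infinite_pairs} define $((v_1^{(0)},v_2^{(0)}),\wb^{(0)}) = ((v_1,v_2),\wb)$ and inductively let $((v_1^{(k+1)},v_2^{(k+1)}),\wb^{(k+1)})$ be the unique infinite LS pair whose $f_{\mathcal{B}_L}$-image is $((v_1^{(k)},v_2^{(k)}),\wb^{(k)})$. By the definition of the $f_{\mathcal{B}_L}$-image of an infinite LS pair, $(v_1^{(k)},v_2^{(k)}) = g_L(v_1^{(k+1)},v_2^{(k+1)})$, i.e.\ there is an edge in $\GL{\mathcal{B}_L}_\vp$ from $(v_1^{(k+1)},v_2^{(k+1)})$ to $(v_1^{(k)},v_2^{(k)})$. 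Next, since the vertex set $\mathcal{B}_L$ is finite, the sequence $((v_1^{(k)},v_2^{(k)}))_{k \geq 0}$ of vertices must take some value twice, say $(v_1^{(i)},v_2^{(i)}) = (v_1^{(j)},v_2^{(j)})$ with $i < j$; then $(v_1^{(j-1)},v_2^{(j-1)}) \to \cdots \to (v_1^{(i)},v_2^{(i)}) = (v_1^{(j)},v_2^{(j)})$ is a cycle, call it $\mathcal{C}$. Finally, I would use out-degree one to propagate membership in $\mathcal{C}$ forward along the walk: if $(v_1^{(k+1)},v_2^{(k+1)})$ lies on $\mathcal{C}$, its unique outgoing edge is the cycle edge, whose head $(v_1^{(k)},v_2^{(k)})$ therefore also lies on $\mathcal{C}$; starting from $(v_1^{(j)},v_2^{(j)}) \in \mathcal{C}$ and descending to $k=0$ gives $(v_1,v_2) = (v_1^{(0)},v_2^{(0)}) \in \mathcal{C}$.

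The only genuine content beyond bookkeeping is the invocation of Lemma~\ref{lem:f_preimage_of_infinite_pairs}, which supplies both the existence and the uniqueness of the backward step and hence the well-definedness of the infinite backward walk; everything after that is an elementary finiteness-plus-out-degree-one argument on a finite functional graph. The mild subtlety I would be careful about is that the lemma is stated for circular D0L-systems, which is exactly the hypothesis of the corollary, so the preimage at each stage is legitimately an infinite LS pair again; no additional regularity (such as primitivity) is needed. I would also remark, as the text already foreshadows with the observation about in-degrees, that this shows more than the bare statement: every vertex reachable backward from an infinite LS pair eventually funnels into the cycle, consistent with the general fact that every component of these graphs contains exactly one cycle.
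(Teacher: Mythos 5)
Your argument is correct and is essentially the paper's intended one: the paper merely observes that Lemma~\ref{lem:f_preimage_of_infinite_pairs} gives every such vertex in-degree at least one and leaves the rest implicit, while you carry out the full iteration of preimages, the pigeonhole step on the finite vertex set, and the out-degree-one propagation onto the cycle. Nothing to correct; your write-up just makes explicit what the paper states as an immediate corollary.
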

We know that the number of infinite LS pairs in a fixed point of a
primitive morphism is finite (see
Proposition~\ref{pro:inf_LS_branch}); the following proposition
says that this is true even if we weaken the assumption from
primitive to circular and non-pushy. The proof of the proposition will,
moreover, give us a simple method of how to find all these
infinite LS pairs.
\begin{thm}\label{thm:infinite_branches}
    Let $(\mathcal{A},\vp,a), a  \in \mathcal{A},$ be a circular
    D0L-system with an L-forky set $\mathcal{B}_L$ such that $\vp^{\omega}(a) = \ub$ is
    infinite. Then there exist only a finite number of infinite LS pairs.
\end{thm}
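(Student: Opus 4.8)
The plan is to argue by a descent/pigeonhole argument on the "non-synchronized" part of infinite LS pairs, exploiting the structure already built up: every infinite LS pair has a unique $f_{\mathcal{B}_L}$-preimage (Lemma~\ref{lem:f_preimage_of_infinite_pairs}), and $(v_1,v_2)$ must lie on a cycle of $\GL{\mathcal{B}_L}_\vp$ (the Corollary above). So the whole set of infinite LS pairs decomposes into "orbits'' under the preimage operation, each orbit eventually cycling through the vertices of one cycle of $\GL{\mathcal{B}_L}_\vp$. Since there are finitely many cycles, it suffices to bound, for a fixed vertex $(v_1,v_2)$ lying on a cycle of period $p$, the number of infinite words $\wb$ with $((v_1,v_2),\wb)$ an infinite LS pair.

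First I would set up the iteration: taking $f_{\mathcal{B}_L}$-preimages $p$ times (one full loop around the cycle) sends an infinite LS pair $((v_1,v_2),\wb)$ back to another infinite LS pair $((v_1,v_2),\wb^{(1)})$ at the same vertex, and by Lemma~\ref{lem:compact_form_of_f_n_image} (its infinite-word analogue) one has $\wb = \ell\,\vp^{p}(\wb^{(1)})$ for a fixed finite word $\ell$ depending only on the cycle, where $|\ell| \le pD$ (each edge label has length $<D$, since an edge label minus its last letter is unsynchronized). Iterating, $\wb = \ell\,\vp^{p}(\ell)\,\vp^{2p}(\ell)\cdots\vp^{(k-1)p}(\ell)\,\vp^{kp}(\wb^{(k)})$. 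Now use the non-pushy hypothesis via Corollary~\ref{cor:constant_C}: if $\vp^{p}$ were to fix the length of a factor of length $\ge C$ we would contradict the "$|v|\ge KC \Rightarrow |\vp(v)|\ge |v|+K$'' growth; hence $|\vp^{kp}(\wb^{(k)})|$ grows with $k$, so the finite prefixes $\ell\,\vp^{p}(\ell)\cdots\vp^{(k-1)p}(\ell)$ exhaust longer and longer prefixes of $\wb$. Consequently $\wb = \lim_{k\to\infty}\ell\,\vp^{p}(\ell)\cdots\vp^{(k-1)p}(\ell)$ is \emph{completely determined} by $\ell$ and $p$ — equivalently, $\wb$ is the unique infinite word with $\wb = \ell\,\vp^{p}(\wb)$. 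Since $\ell$ and $p$ are fixed by the vertex/cycle, there is at most one infinite LS pair at each vertex lying on a cycle, and since every infinite LS pair sits at such a vertex and there are finitely many of them, the total number is finite.

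The main obstacle I expect is the length-growth / well-definedness step: one must make sure that $\vp^{p}$ does strictly increase the length of the relevant infinite suffix $\wb^{(k)}$ so that the prefixes genuinely converge to a single word (and that $\wb^{(k)}$ is genuinely infinite, not eventually trivial). This is exactly where non-pushiness is used — without it $\vp^{p}$ could stabilise an infinite eventually-periodic suffix and one could have a continuum of infinite LS pairs at a single vertex. Concretely I would note that $\wb^{(k)}$ is a one-sided infinite factor of $\ub$, apply Corollary~\ref{cor:constant_C} to any of its prefixes of length $\ge KC$ to get $|\vp^{p}(\,\cdot\,)| \ge |\cdot| + K$, and conclude $|\vp^{kp}(\text{prefix})| \to \infty$. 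The remaining points — that $|\ell|<pD$, that the preimage at each step is unique, and that passing around the cycle returns to the same vertex — are either immediate from the definition of forky sets and the out-degree-one lemma, or already recorded in Lemma~\ref{lem:f_preimage_of_infinite_pairs} and the Corollary preceding the theorem, so I would invoke them without re-proving.
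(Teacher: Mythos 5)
Your overall skeleton matches the paper's: decompose the infinite LS pairs into preimage-orbits, note each sits on a cycle of $\GL{\mathcal{B}_L}_\vp$, go once around a cycle of length $p$ to get $\wb = \ell\,\vp^{p}(\wb^{(1)})$, iterate, and conclude that $\wb$ is the unique solution of $\wb = \ell\,\vp^{p}(\wb)$. This is exactly the paper's case $(a)$. But there is a genuine gap: your uniqueness argument collapses when $\ell = \epsilon$, i.e.\ when every edge label around the cycle is the empty word. In that case the prefixes $\ell\,\vp^{p}(\ell)\cdots\vp^{(k-1)p}(\ell)$ are all empty, they do not exhaust $\wb$, and the equation $\wb = \vp^{p}(\wb)$ merely says $\wb$ is a periodic point of $\vp$ --- of which there can be several, so ``at most one infinite LS pair per cycle vertex'' is not established (and need not hold). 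Your proposed repair via Corollary~\ref{cor:constant_C} does not touch this: non-pushiness makes $|\vp^{kp}(w)|$ grow for finite factors $w$, but that says nothing about how much of $\wb$ is pinned down by the (empty) accumulated labels. The paper handles this case separately: when all labels are empty, the first letter $b$ of $\wb$ must lie on a cycle of the ``first-letter graph'' of $\vp$, so $\vp^{j\ell}(b)$ is a prefix of $\wb$ for all $j$ and hence $\wb = (\vp^{\ell})^{\omega}(b)$ is one of the finitely many periodic points of $\vp$; finiteness then follows from finiteness of the alphabet, not from uniqueness at each vertex.

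A secondary remark: in the case $\ell \neq \epsilon$ you do not need non-pushiness at all. Since $\vp$ is non-erasing, $|\vp^{jp}(\ell)| \geq |\ell| \geq 1$, so the concatenated prefixes have length at least $k$ after $k$ rounds and already converge to $\wb$; this is what the paper uses. So the one place where you invoke Corollary~\ref{cor:constant_C} is the one place where it is superfluous, while the degenerate empty-label case --- the only place where something extra is needed --- is left unaddressed. Adding the periodic-point argument for that case would complete your proof and bring it in line with the paper's.
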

\begin{proof}
    Let $((v_1,v_2),\wb)$ be an infinite LS pair and let
    $(v_1,v_2)$ be a vertex of a cycle in $\GL{\mathcal{B}_L}_\vp$. Let us
    assume that the cycle is of length $k$. Then it is labeled
    by words $f_L(v_1,v_2), f_L(g_L(v_1,v_2)), \ldots, f_L(g_L^{k-1}(v_1,v_2))$ where $f_L(v_1,v_2)$ is the label of the
    edge starting in $(v_1,v_2)$ (see Figure~\ref{fig:proof_of_infinite_branches}).
    \begin{figure}[h]
        \begin{center}\includegraphics{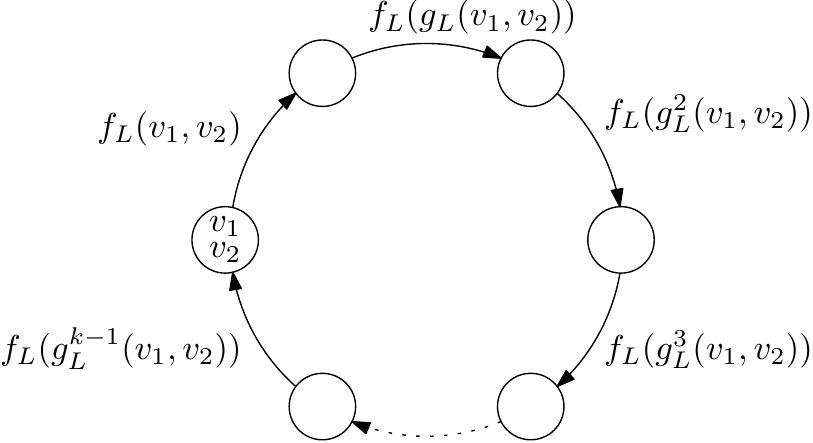}\end{center}
        \caption{The notation from the proof of Theorem~\ref{thm:infinite_branches}.} \label{fig:proof_of_infinite_branches}
    \end{figure}
    We distinguish two cases:\\[2mm]
    $(a)$ At least one of the labels of the cycle is not
    the empty word. Applying $k$ times  Lemma~\ref{lem:f_preimage_of_infinite_pairs} we can find the
    infinite LS pair $((v_1,v_2),\wb')$ such that $w$ is the
    $(f_{\mathcal{B}_L})^{k}$-image of $((v_1,v_2),\wb')$, i.e., 
    $$
        \wb = \underbrace{f_L(g_L^{k -1}(v_1,v_2)) \cdots
        \vp^{k -2}(f_L(g_L(v_1,v_2))\vp^{k
        -1}(f_L(v_1,v_2))}_{\text{denoted by }s}\vp^{k}(\wb') = s\vp^{k}(\wb').
    $$
    Since $\wb'$ can be expressed again as $\wb' =
    s\vp^k(\wb'')$ for some infinite LS pair $((v_1,v_2),\wb'')$, we have
    $$
        \wb = s\vp^k(s)\vp^{2k}(s)\vp^{3k}(\wb'').
    $$
    Continuing in this construction one can prove that
    $s\vp^{k}(s)\cdots\vp^{nk}(s)$ is a prefix of $\wb$ for all $n \in \N$. Therefore, we get
    $$
        \wb = s\vp^{k}(s)\vp^{2k}(s)\vp^{3k}(s)\cdots.
    $$

    We have just shown that exactly one infinite LS pair corresponds to each vertex of the cycle.
    
    \noindent $(b)$ Now assume that all the labels of the cycle are empty words.
    In such a case the $f_{\mathcal{B}_L}$-image coincides with $\vp$-image,
    meaning that $(f_{\mathcal{B}_L})^j$-image of $((v_1,v_2),\wb)$ is
    $(g_L^j(v_1,v_2),\vp^j(\wb))$ for all $j = 1,2,\cdots$. We
    want to prove that $\wb$ must be a periodic point of $\vp$.
    Consider the directed graph whose vertices are the first
    letters of $\vp(b), b \in \mathcal{A},$ and there is an edge from $b$
    to $c$ if $c$ is the first letter of $\vp(b)$. Clearly, the
    first letter of $\wb$, say $b$, must be again a vertex of a
    cycle in this graph. Let $\ell$ be the length of this cycle.
    For reasons analogous to those above the
    $(f_{\mathcal{B}_L})^{j\ell}$-image and $(f_{\mathcal{B}_L})^{j\ell}$-preimage
    of $\wb$ must also begin in $b$. Therefore, $\wb$ contains the
    factor $\vp^{j\ell}(b)$ as a prefix for all $j = 1,2,\ldots$ and this implies that $\wb =
    (\vp^\ell)^\omega(b)$, i.e., $\wb$ is a periodic point of $\vp$.

    Since the number of vertices of $\GL{\mathcal{B}_L}_\vp$ and of
    periodic points is finite, the number of infinite LS pairs
    must be finite as well.
\end{proof}
The previous proof is also a proof of the following
corollary which gives us a method of how to find all infinite LS
branches.
\begin{cor}\label{cor:how_to_find_LS_branches}
    Let $(\mathcal{A},\vp,a),a \in \mathcal{A},$ be a circular D0L-system, $\ub
    = \vp^{\omega}(a)$ infinite with $\mathcal{B}_L$ an L-forky set  and let
    $((v_1,v_2),\wb)$ be an infinite LS pair. Then either $\wb$ is a 
    \emph{periodic point} of $\vp$, i.e.,
    \begin{equation}\label{eq:eq_for_inf_LS_type1}
        \wb = \vp^{\ell }(\wb) \quad \text{for some $\ell \geq 1$},
    \end{equation}
    and $(v_1,v_2)$ is a vertex of a cycle in $\GL{\mathcal{B}_L}_\vp$ labeled by
    $\epsilon$ only, or $\wb = s\vp^{\ell }(s)\vp^{2\ell}(s)\cdots$ is the
    unique solution of the equation
    \begin{equation}\label{eq:eq_for_inf_LS_type2}
        \wb = s\vp^{\ell }(\wb),
    \end{equation}
    where $(v_1,v_2)$ is a vertex of a cycle in $GL_\vp$ containing at
    least one edge with a non-empty label, $\ell$ is the length of this
    cycle and
    \begin{equation}\label{eq:def_of_prefix_s}
        s = f_L(g_L^{\ell -1}(v_1,v_2)) \cdots
        \vp^{\ell -2}(f_L(g_L(v_1,v_2))\vp^{\ell -1}(f_L(v_1,v_2)).
    \end{equation}
\end{cor}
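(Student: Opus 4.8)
The plan is to observe that the statement has essentially already been established in the proof of Theorem~\ref{thm:infinite_branches}; only a few points need to be isolated and made explicit. First I would invoke the corollary stated just before Theorem~\ref{thm:infinite_branches}: since $((v_1,v_2),\wb)$ is an infinite LS pair, the vertex $(v_1,v_2)$ must lie on a cycle of $\GL{\mathcal{B}_L}_\vp$; call this cycle $\mathcal{C}$ and let $\ell$ be its length. Then I split into the two cases according to whether some edge of $\mathcal{C}$ carries a non-empty label, and show that these correspond exactly to the two alternatives in the statement.

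In the case where every edge of $\mathcal{C}$ is labelled by $\epsilon$, the $f_{\mathcal{B}_L}$-image of the pair coincides with the plain $\vp$-image, so the $(f_{\mathcal{B}_L})^{j\ell}$-image of $((v_1,v_2),\wb)$ is $((v_1,v_2),\vp^{j\ell}(\wb))$ for every $j\in\N$; applying Lemma~\ref{lem:f_preimage_of_infinite_pairs} the same number of times shows that $\wb$ is simultaneously a $\vp^{j\ell}$-image and a $\vp^{j\ell}$-preimage of infinite LS pairs with first vertex $(v_1,v_2)$. Following the first letter $b$ of $\wb$ through the auxiliary ``first-letter'' graph used in the proof of the theorem, $b$ lies on a cycle there of some length $\ell'$, and stability of $\wb$ under $(f_{\mathcal{B}_L})^{\pm j\ell'}$ forces $\vp^{j\ell'}(b)$ to be a prefix of $\wb$ for all $j$, so $\wb=(\vp^{\ell'})^\omega(b)$ is a periodic point of $\vp$. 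This is the first alternative, with $\ell'$ playing the role of the exponent in \eqref{eq:eq_for_inf_LS_type1}.

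In the remaining case some label on $\mathcal{C}$ is non-empty. Here I apply Lemma~\ref{lem:f_preimage_of_infinite_pairs} $\ell$ times to produce an infinite LS pair $((v_1,v_2),\wb')$ whose $(f_{\mathcal{B}_L})^{\ell}$-image is $((v_1,v_2),\wb)$; unwinding the iterated $f_{\mathcal{B}_L}$-image exactly as in the proof of the theorem (the analogue for infinite pairs of Lemma~\ref{lem:compact_form_of_f_n_image}) gives $\wb=s\vp^{\ell}(\wb')$ with $s$ the word in \eqref{eq:def_of_prefix_s}. Iterating this identity yields $\wb=s\vp^{\ell}(s)\vp^{2\ell}(s)\cdots$. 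For uniqueness of the solution of \eqref{eq:eq_for_inf_LS_type2} I would note that $s\neq\epsilon$: the term of $s$ coming from an edge at position $i$ along $\mathcal{C}$ is $\vp^{\ell-1-i}\bigl(f_L(g_L^i(v_1,v_2))\bigr)$, and since at least one $f_L(g_L^i(v_1,v_2))$ is non-empty and $\vp$ is non-erasing, this term is non-empty. Consequently the map $\wb\mapsto s\vp^{\ell}(\wb)$ strictly lengthens any finite prefix it fixes, so an infinite word satisfying \eqref{eq:eq_for_inf_LS_type2} is determined letter by letter and must equal the displayed infinite product. This is the second alternative.

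The two cases are plainly exhaustive, and the only genuinely delicate point is the uniqueness claim in the second case, which is exactly where the PD0L (non-erasing) hypothesis enters, guaranteeing $s\neq\epsilon$. A minor but worthwhile remark is that the exponent $\ell$ in the periodic-point alternative is the cycle length in the first-letter graph, which bears no direct relation to the cycle length of $\mathcal{C}$ in $\GL{\mathcal{B}_L}_\vp$, so the ``$\ell$'' appearing in \eqref{eq:eq_for_inf_LS_type1} and the one in \eqref{eq:eq_for_inf_LS_type2} should not be conflated.
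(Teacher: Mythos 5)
Your proof is correct and follows essentially the same route as the paper, which simply declares that the proof of Theorem~\ref{thm:infinite_branches} already establishes this corollary; you reproduce the two cases of that proof in the right order. The only addition is your explicit argument that $s\neq\epsilon$ (via non-erasingness) and hence that \eqref{eq:eq_for_inf_LS_type2} has a unique solution, a point the paper leaves implicit, and your closing remark correctly notes that the exponent in \eqref{eq:eq_for_inf_LS_type1} comes from the first-letter graph rather than from the cycle of $\GL{\mathcal{B}_L}_\vp$.
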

We demonstrate this method on an example morphism.
\begin{exa}
    We consider the morphism
    \begin{equation} \label{eq:ex_subst}
        \vpi{P}: 1 \mapsto 1211, 2 \mapsto 311, 3 \mapsto 2412, 4 \mapsto 435, 5 \mapsto 534
    \end{equation}
    with $\ub = \vpi{P}^\omega(1)$. This morphism is suffix- and prefix-free and
    so the set of all unordered pairs of distinct letters is L-forky. The graph
    of left prolongations is in Figure~\ref{fig:GL_example}.
    \begin{figure}
        \begin{center}\includegraphics{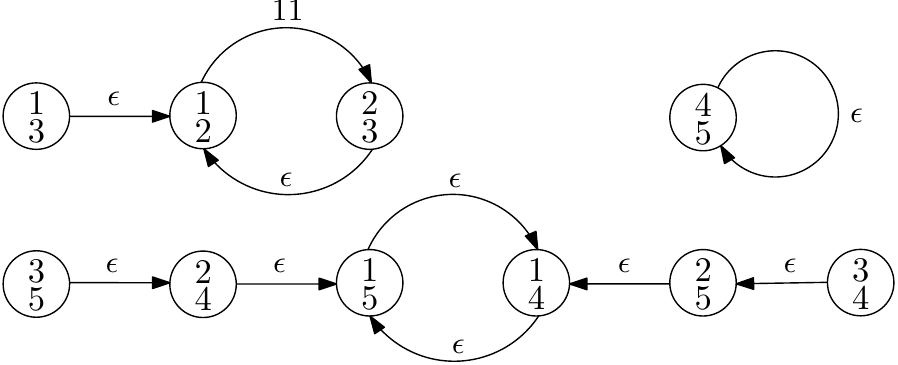}\end{center}
        \caption{The graph $GL^{(\mathcal{B}_1)}_{\vpi{P}}$ for morphism defined
        by~\eqref{eq:ex_subst}, $\mathcal{B}_1$ is the set of all unordered
        pairs of letters.}
        \label{fig:GL_example}
    \end{figure}
    The morphism $\vpi{P}$ has five periodic points
    $$
        \vpi{P}^\omega(1),\vpi{P}^\omega(4),\vpi{P}^\omega(5),(\vpi{P}^2)^\omega(2),(\vpi{P}^2)^\omega(3).
    $$
    It is easy to show that
    \begin{multline*}
        \Le(1) = \{1,2,3,4,5\}, \Le(2) = \{1,4,5\}, \Le(3) = \{1,4,5\},\\
        \Le(4) = \{1,2,3\}, \Le(5) = \{1,2,3\}.
    \end{multline*}
    Looking at the graph of left prolongations depicted in
    Figure~\ref{fig:GL_example}, we see that
    $\vpi{P}^\omega(4)$ and $\vpi{P}^\omega(5)$ are not infinite LS branches as none
    of the vertices $(1,2), (2,3)$ and $(1,3)$ is a vertex of a cycle
    labeled by $\epsilon$ only. Hence, only
    $\vpi{P}^\omega(1),(\vpi{P}^2)^\omega(2),(\vpi{P}^2)^\omega(3)$ are infinite
    LS branches with left extensions $1,4,5$.

    As for infinite LS branches corresponding to
    Equation~\eqref{eq:eq_for_inf_LS_type2}, in the case of our
    example, there is only one cycle which is not labeled by the empty word:
    the cycle between vertices $(1,2)$ and $(2,3)$. There are two (= the
    length of the cycle) equations corresponding to this cycle
    $$
        \wb = \vpi{P}(11)\vpi{P}^2(\wb) \quad \text{and} \quad \wb =
        11\vpi{P}^2(\wb).
    $$
    They give us two infinite LS branches
    \begin{equation*}
        \begin{split}
            & \vpi{P}(11)\vpi{P}^3(11)\vpi{P}^5(11)\cdots,\\
            & 11\vpi{P}^2(11)\vpi{P}^4(11)\cdots,
        \end{split}
    \end{equation*}
    the former having left extensions $1$ and $2$ and the latter $2$
    and $3$.
\end{exa}

\section{Assumptions and a connection with the critical exponent}
\label{sec:assumptions}

The method of how to generate all BS factors of a given D0L-system we have
described above bears on two facts: There exists L- and R-forky sets and the
number of initial BS triplets is finite. The former was proved for circular and
non-pushy systems and the later for circular ones only. Are these assumptions
necessary or can they be weakened?

Let us take a morphism $0 \mapsto 001, 1 \mapsto 1$ which is pushy and circular
and its fixed point $\ub$ starting in $0$. If we try to construct an L-forky set as it
has been defined in this paper, we will find out that it is not possible. A
natural cantidate for veritices of the graph of left prolongations are pairs
$(0,1^n)$ with $n \in \N$, but any finite set of such pairs does not satisfy the property
$(iv)$ of Definition~\ref{dfn:L_forky}. So it seems that to assume the morphism 
being non-pushy is inevitable for existence of forky
sets. However, if we relax the definition and enable the pairs of factors to be
infintely long, we can find something like L-forky set even for this morphism:
Define a directed graph of left prolongations such that it has only one vertex
$(0,\cdots 111)$ and one loop on this vertex with label 1 and a directed graph
of right prolongations with one vertex $(0,1)$ and a loop on it with empty label,
then all BS factors in $\ub$ are the $f$-images of BS-triplet $((0,\cdots
111),0,(0,1))$, namely $0, 1\vp(0), 1\vp(1)\vp^2(0), \ldots$

Now, consider a morphism $0 \mapsto 001, 1 \mapsto 11$
and its fixed point $\ub$ starting in $0$. This morphism is non-pushy and
non-circular. In this case, L- and R-forky sets exists, we can simply take
$\{(0,1)\}$ and there is only one (nonempty) initial $BS$-triplet with no
\emph{BS-synchronizing} point $((0,1),0,(0,1))$. Its $f$-images again reed $0,
1\vp(0), 1\vp(1)\vp^2(0), \ldots$. In fact, to prove that the set of initial 
BS-triplets is finite, we need to know only that there is not infinite numer of
BS-triplets without BS-synchronizing point and it seems to be true even for
non-circular morphisms.

All considered morphisms for which our method does not work (or is not proved to
work) have an infinite critical exponent. The following theorem says this is not
a misleading observation but a general rule.
\begin{thm}
	Let $G = ({\A}, \vp, w)$ be a D0L-system.
	Then the critical exponent of ${\Lan}(G)$ is finite if and only if $G$ is
	circular and non-pushy.
\end{thm}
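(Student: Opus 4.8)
The statement is an "if and only if", and the two directions have very different flavors, so I would treat them separately.

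For the easier direction, suppose $G$ is circular and non-pushy; I want to show the critical exponent is finite. The key tool is Corollary~\ref{cor:constant_C}: there is an effectively computable $C$ (and, after replacing $\vp$ by a power, we may assume $|\vp(v)| \geq |v| + \lceil |v|/C \rceil$ for all factors $v$, so $\vp$ "expands" linearly). Now take any factor $w$ with a large power $w^r \in \mathcal{L}(G)$. Using circularity, once $|w^r|$ exceeds the synchronizing delay $D$, the occurrence of $w^r$ has synchronizing points, and these synchronizing points must themselves be "periodic" with period $|w|$ inside the power (two copies of $w$ that are far enough from the ends look identical, hence carry the same synchronizing structure). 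Desynchronizing, i.e. taking the $\vp$-preimage, we obtain a factor $w'^{r'}$ with $r' \geq r - O(1)$ (the boundary pieces we lose are bounded by $D$), while $|w'| \leq |w|/\lambda$ for the expansion factor $\lambda > 1$ coming from Corollary~\ref{cor:constant_C} — unless $|w|$ is already bounded by a constant depending only on $C$ and $D$. Iterating, $|w|$ shrinks geometrically while $r$ barely decreases, which is impossible once $r$ is large; this bounds $r$, hence the critical exponent. This is essentially the argument of Lemma~11 in~\cite{Krieger2007} referenced in Remark~\ref{rem:important_Krieger}, so I would cite it and only sketch the desynchronization bookkeeping.

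For the harder direction I must show: if $G$ is \emph{not} (circular and non-pushy), then the critical exponent is infinite. So I split into two cases. If $G$ is pushy, then by part~2 of Lemma~\ref{lem:pushy_systems} $\mathcal{L}(G)$ is strongly repetitive, i.e. there is a nonempty $v$ with $v^\ell \in \mathcal{L}(G)$ for all $\ell$, so the index of $v$ is infinite and we are done. If $G$ is non-circular, I want the same conclusion. Here I would invoke Theorem~\ref{thm:mignosi_seebold} in contrapositive form: a $k$-power-free D0L-system is circular, so a non-circular system fails to be $k$-power-free for every $k$, meaning $\mathcal{L}(G)$ contains a $k$-power (of some word, possibly depending on $k$) for every $k \in \N$. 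Then Theorem~\ref{thm:strongly_repetitive} (Ehrenfeucht–Rozenberg) upgrades this to strong repetitiveness: there is a single nonempty $v$ with $v^\ell \in \mathcal{L}(G)$ for all $\ell$. Either way the critical exponent is infinite. Note the only subtlety: Theorem~\ref{thm:mignosi_seebold} as stated requires the system to be $k$-power-free to conclude circularity, so "non-circular" gives "for all $k$, not $k$-power-free", which is exactly the hypothesis of Theorem~\ref{thm:strongly_repetitive}.

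The main obstacle is the finiteness direction, specifically making the desynchronization step quantitatively honest: one must check that when a large power $w^r$ is pulled back through $\vp$, the synchronizing points really do align with the period (so that the preimage is again essentially a power), and that the two "ragged ends" lost in the process each have length at most the synchronizing delay $D$ rather than growing with $r$. The period-alignment uses injectivity of $\vp$ on $\mathcal{L}(G)$ together with the uniqueness of synchronizing points once we are past the delay; the bound on the ragged ends uses that a word with no synchronizing point has length $< D$. Both points are routine given circularity, and since essentially the same computation already appears in~\cite{Krieger2007}, I would present it compactly and lean on that reference for the technical estimate, devoting the bulk of the written proof to assembling the two directions cleanly from the cited results.
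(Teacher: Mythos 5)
Your treatment of the direction ``finite critical exponent $\Rightarrow$ circular and non-pushy'' (which you label the harder one) is correct and coincides with the paper's: pushiness is excluded by item~2 of Lemma~\ref{lem:pushy_systems}, and non-circularity by the contrapositive of Theorem~\ref{thm:mignosi_seebold} (your extra appeal to Theorem~\ref{thm:strongly_repetitive} here is harmless but unnecessary, since a $k$-power of \emph{some} word for every $k$ already makes the supremum infinite). Note, though, that you have the difficulty backwards: this is the short direction, and the one you dispatch in a paragraph with a citation is where the real work lies.

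That converse direction, as you sketch it, has a genuine quantitative gap. Each desynchronization step costs a bounded number of periods, $r' \geq r - O(1)$, while $|w|$ only shrinks geometrically, so reaching a word of bounded length takes on the order of $\log|w|$ steps; all your iteration yields is $\mathrm{ind}(w) \leq I + O(\log|w|)$, where $I$ bounds the indices of the finitely many words shorter than $\max(C,D)$. This is not uniform in $w$, so it does not bound the critical exponent: your ``which is impossible once $r$ is large'' does not rule out a sequence of longer and longer $w$ whose indices grow like $\log|w|$. Moreover the existence of $I$ is itself unproved --- it amounts to knowing that no short word has infinite index, i.e.\ that the system is not strongly repetitive, which is essentially what is being established. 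The paper closes both holes by arguing by contradiction and invoking Theorem~\ref{thm:strongly_repetitive} (Ehrenfeucht--Rozenberg) first: an infinite critical exponent gives a $k$-power for every $k$, hence a \emph{single shortest} nonempty $v$ with $v^n \in \mathcal{L}(G)$ for all $n$. For this $v$ no bookkeeping is needed: its desynchronized preimage $z$ again satisfies $z^k \in \mathcal{L}(G)$ for all $k$, so minimality of $|v|$ together with $|\vp(z)| = |v|$ forces $|z| = |\vp(z)|$, i.e.\ every letter of $z$ has a one-letter image; iterating, the letter sets eventually cycle, those letters have rank zero, and the arbitrarily long factors $z^k$ contradict non-pushiness. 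You need either this reduction or some other mechanism controlling the cumulative loss; note also that, per Remark~\ref{rem:important_Krieger}, Lemma~11 of \cite{Krieger2007} concerns the bound $|f_L(w_1,w_2)| \leq D$ used for forky sets, so citing it does not by itself supply the missing estimate.
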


\begin{proof}
$(\Rightarrow)$: Circularity follows from Theorem~\ref{thm:mignosi_seebold}. $G$
being pushy is in contradiction with Lemma~\ref{lem:pushy_systems}, thus, it is non-pushy.

\vspace{\baselineskip}
\noindent $(\Leftarrow)$: Suppose the critical exponent of $\mathcal{L}(G)$ is
infinite and that $G$ is circular and non-pushy. According to
Theorem~\ref{thm:strongly_repetitive}, there exists a non-empty factor $v \in \mathcal{L}(G)$ such that for all $n \in \N$, $v^n \in \mathcal{L}(G)$. Take the shortest factor $v$ having such property.
Since $G$ is circular, there exists a finite synchronizing delay $D$.
Take $N \in \N$ such that $|v^N| \geq D$.
Then $v^N$ contains a synchronizing point, i.e., $v^N = v_1 |_s v_2$.
It is clear that $v^{N+1}$ contains at least two synchronizing points, i.e.,
$v^{N+1} = v_1 |_s v_2 v = v v_1 |_s v_2$. In general, $v^{N+k}$ contains $k+1$
synchronizing points at fixed distances equal to $|v|$. Since $\vp$ is injective, it implies that there exists a unique $z \in \mathcal{L}(G)$ such that $v^{N+k} = p \vp(z^{k})s$ 
(for some factors $p$ and $s$) and $z^k \in \mathcal{L}(G)$ for all $k \geq 0$.
According to the choice of $v$, it is clear that $|\vp(z)| = |z| = |v|$.
Denote by ${\mathcal{L}}_1(z)$  the set of letters occurring in $z$.
It is clear that $\vp({\mathcal{L}}_1(z)) = {\mathcal{L}}_1(v)$ and $\forall a \in
{\mathcal{L}}_1(z)$ we have $|\vp(a)| = 1$.

We can now repeat the process: take the factor $z$ to play the role of factor $v$.
Thus, we can find an infinite sequence of factors $z_0 = z, z_1, z_2 \ldots $ such that
$\vp({\mathcal{L}}_1(z_{k+1})) = {\mathcal{L}}_1(z_{k})$ and $|z_k| = |z|$ for all $k \geq
0$. Since $\A$ is finite, it is clear that there exists integers $m \neq \ell$
such that ${\mathcal{L}}_1(z_m) = {\mathcal{L}}_1(z_\ell)$. This implies that for all $k$ the
factor $z_k$ is composed of letters of rank zero. This is a contradiction with $G$ being non-pushy.
\end{proof}

\section{Conclusion}

The tool we have introduced in this paper enables to construct an algorithm
which can find all BS factors in a given circular non-pushy D0L-system so that
it produces the graphs of prolongations and the set of initial BS factors -- its
slightly simplified version was implemented by \v{S}t\v{e}p\'{a}n Starosta
using SAGE. The scatch of the algorithm is as follows:
\begin{enumerate}
  \item Decide whether the input D0L-system is strongly repetitive using the
  algorithm from \cite{Ehrenfeucht1983}. If it is, then by the previous theorem
  and Theorem~\ref{thm:strongly_repetitive} the D0L-system is non-circular or
  pushy and our method does not work. If it is not, proceed with to steps.
  \item Construct L- and R-forky sets: the details of the construction are a bit
  technical but the basic idea is the same we used in
  Example~\ref{exa:vp_S_set_B}.
  \item Find all initial BS triplets without any BS-synchronizing point. The
  fact that the system is circular ensures the algorithm stops after a finite
  number of steps.
\end{enumerate}

\section{Acknowledgement}

We acknowledge financial support by the Czech Science Foundation grant
201/09/0584,  and by the grants MSM6840770039 and LC06002 of the Ministry of
Education, Youth, and Sports of the Czech Republic. The author also thanks
\v{S}t\v{e}p\'{a}n Starosta and Edita Pelantov\'{a} for their suggestions and
the fruitful discussions.

%% The Appendices part is started with the command \appendix;
%% appendix sections are then done as normal sections
%% \appendix

%% \section{}
%% \label{}

%% References
%%
%% Following citation commands can be used in the body text:
%% Usage of \cite is as follows:
%%   \cite{key}         ==>>  [#]
%%   \cite[chap. 2]{key} ==>> [#, chap. 2]
%%

%% References with BibTeX database:

%\bibliographystyle{elsarticle-num}
\bibliographystyle{plain}
\bibliography{publications}

\end{document}